\newtheoremstyle{styl1}
  {13pt}
  {13pt}
  {}
  {}
  {\itshape}
  {.}
  {.5em}
  {}
\theoremstyle{styl1}
\newtheorem{definition}{Definition}
\newtheorem{example}[definition]{Example}
\newtheorem{examples}[definition]{Examples}
\newtheorem{remark}[definition]{Remark}
\newtheorem{remarks}[definition]{Remarks}
\newtheorem{lemma}[definition]{Lemma}
\newtheorem{theorem}[definition]{Theorem}
\newtheorem{proposition}[definition]{Proposition}
\newtheorem{corollary}[definition]{Corollary}
\newcommand{\gGNS}{GNS }
\newcommand{\tft}{topological field theory }
\newcommand{\tfts}{topological field theories }
\newcommand{\R}{\mathbb{R}}
\newcommand{\C}{\mathbb{C}}
\newcommand{\Ca}{\mathcal{C}}
\newcommand{\K}{\mathbb{K}}
\newcommand{\Hom}{\text{Hom}}
\newcommand{\End}{\text{End}}
\newcommand{\cob}{\text{cob}}
\newcommand{\Set}{\text{Set}}
\newcommand{\lR}{\text{lR}}
\newcommand{\rR}{\text{rR}}
\newcommand{\vs}{\text{vect}_\K}
\newcommand{\RM}[1]{\MakeUppercase{\romannumeral #1{.}}}
\renewcommand{\SS}{\mathbb{S}}
\newcommand{\DD}{\mathbb{D}}
\begin{document}

\thispagestyle{empty}
\begin{flushright}
   {\sf ZMP-HH/17-12}\\
   {\sf Hamburger$\;$Beitr\"age$\;$zur$\;$Mathematik$\;$Nr.$\;$652}\\[2mm]
   March 2017
\end{flushright}
\vskip 2.0em

\begin{center}{\bf \Large
A GNS\ construction of three-dimensional abelian \\[6pt]
Dijkgraaf-Witten theories}

\vskip 18mm

 {\large \  \ Lukas M\"uller\,$^{\,a  \quad}$
 and \quad
 Christoph Schweigert\,$^{\,b}$
 }

\vskip 12mm

 \it$^a$
Department of Mathematics, 
Heriot-Watt University \\
Colin Maclaurin Building, Riccarton, Edinburgh EH14 4AS, U.K. \\
and Maxwell Institute for Mathematical Sciences, Edinburgh, U.K.

 \it$^b$
 Fachbereich Mathematik, \ Universit\"at Hamburg\\
 Bereich Algebra und Zahlentheorie\\
 Bundesstra\ss e 55, \ D\,--\,20\,146\, Hamburg

\end{center}

\vskip 3.2em

\noindent{\sc Abstract}\\[3pt]
We give a detailed account of the so-called ``universal construction'' that
aims to extend invariants 
    of closed manifolds, possibly with additional structure,
to topological field theories and show that 
it amounts to a generalization of the GNS construction.
We apply this construction to an invariant defined in terms of the 
groupoid cardinality
of groupoids of bundles to recover Dijkgraaf-Witten theories, including the
vector spaces obtained as a linearization of spaces of principal bundles.


\section{Introduction}

The Gelfand-Naimark-Segal (GNS) construction associates to a $C^\star$ algebra
$A$ and a state on $A$ a Hilbert space. Similar constructions work in
a purely algebraic setting, and it has been known for a long time
\cite[p.6]{Kerler:1995jw}\cite[p.32]{HomologyTQFT} that the construction of topological field theories
from invariants of closed manifolds with links can be understood in this
way. A Topological field theories is a symmetric monoidal functors from a
category of cobordisms to 
a symmetric monoidal category, say 
vector spaces. The invariants of links
in closed manifolds have various sources. One of them is the Kauffman
bracket; the corresponding three-dimensional topological field theory
has been constructed in \cite{BHMV}. Indeed, our general construction
in section \ref{sec:GNS} of this note is inspired by \cite{BHMV} and many
results in section \ref{sec:GNS} generalize results in \cite{BHMV}.

Heuristically, the invariant for closed manifolds can be seen as the
result of the evaluation of a path integral. 
   In the simplest case of vanishing action, 
   the path integral should count configurations. 
For gauge theories 
based on finite gauge groups, so-called Dijkgraaf-Witten theories
\cite{DW90} \cite{freed1993}, these configurations are finite groupoids; 
counting then means to determine the groupoid cardinality of this groupoid. In
section \ref{sec:DW}, we explicitly deal with Dijkgraaf-Witten theories and 
exhibit a clear relation between
groupoid cardinalities of bundles on three-manifolds with Wilson lines (or, more precisely, ribbon links) 
and linearizations of spaces of $G$-bundles on two-manifolds.

Our results admit several generalizations, including to theories in higher
dimensions and to topological field theories with values in a monoidal
category of modules over a commutative ring. Our results should 
also pave the way towards a more interesting and
challenging generalization, a  categorification of the present
construction, leading to extended topological field theories.

\section{The universal construction as a GNS\ construction}
\label{sec:GNS}

In this section, we present a general formulation of the GNS construction
that is tailored to the construction of topological field theories from
invariants of manifolds.

In a first step, we associate to a category $\Ca$ and an 
object $O\in \Ca $ two functors to the category $\vs$ of
$\K$-vector spaces, where $\K$ is an arbitrary field,
 \[
\begin{tikzcd}
\mathcal{F}_O \colon \; \;  \mathcal{C} \arrow{rr}{\text{Hom}(O, \cdot )}
& & \Set \arrow{rr}{\K[\cdot ]} & & \vs  \\
\end{tikzcd}\; 
\]
and, dually, 
\[
\begin{tikzcd}
\mathcal{F}^{co}_O \colon \; \;  \mathcal{C}^{\mathrm{opp}}\arrow{rr}{\text{Hom}(\cdot
, O )} & & \Set \arrow{rr}{\K[\cdot ]} & & \vs  \\
\end{tikzcd}\; .
\]
Here, $\K[\cdot]\colon \Set \rightarrow \vs$ is the functor that
assigns to a set the $\K$-vector space freely generated over
the set. As an illustrative example inspired by
\cite{BHMV}, the reader might 
keep in mind the example where $\Ca$ is a category
of cobordisms and $O= \emptyset $. Then $\text{End}_\Ca(O)$
are closed manifolds, possibly with additional structure, e.g.
embedded links.
In this situation, important examples of maps of sets 
$I \colon \text{End}_\Ca(O) 
\rightarrow \K$ are invariants of manifolds with embedded
links. In general, we call a map of sets
$I \colon \text{End}_\Ca(O) \rightarrow \K$
a state rooted in the object $O$.

A choice of a state $I$ rooted in $O$
defines for every object $c\in \Ca $ a bilinear 
pairing   
$$\begin{array}{rrll}
(\cdot,\cdot)_c \colon & \mathcal{F}_{O}(c)\otimes_\K 
\mathcal{F}^{co}_{O}(c)&\rightarrow& \K \\
& \delta_f \otimes \delta_g &\mapsto& I(g \circ f), 
\end{array}$$
where $\{\delta_f \mid f\in \Hom(O,c) \}$ and $\{\delta_g \mid g\in \Hom(c,O) \}$ are the canonical bases of the freely generated vector
spaces $\mathcal{F}_{O}(c)$ and $\mathcal{F}^{co}_{O}(c)$, 
respectively. (If $\K$ is the field of complex numbers, 
a sesquilinear pairing can be constructed as well.) 
In general, these pairings are degenerate with a left 
radical $\lR_c$ and right radical $\rR_c$. We consider
the quotients 
\[\mathcal{F}_{O,I}(c) := \mathcal{F}_O(c) / \lR_c  
\text{  and  }
\mathcal{F}^{co}_{O,I}(c) := \mathcal{F}^{co}_O(c) / 
\rR_c  \text{ for all }
c \in \Ca \]
and denote the induced non-degenerate pairing between the
vector spaces $\mathcal{F}_{O,I}(c)$ and $\mathcal{F}^{co}_{O,I}(c)$ by $\langle\cdot , \cdot \rangle_c$.

\begin{lemma}
For any category $\Ca$ and any state 
$I \colon \text{End}_\Ca(O) \rightarrow \K$, we obtain
well-defined functors 
$\mathcal{F}_{O,I}:\Ca\to\vs$ and $\mathcal{F}^{co}_{O,I}:
\Ca^{\mathrm{opp}}\to\vs$. 
\end{lemma}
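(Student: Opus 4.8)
The plan is to note first that $\mathcal{F}_O=\K[\,\cdot\,]\circ\Hom(O,\cdot)$ and $\mathcal{F}^{co}_O=\K[\,\cdot\,]\circ\Hom(\cdot,O)$ are functors by construction, being composites of the (co- and contravariant) Hom-functors with the free-vector-space functor. The only content of the statement is therefore that passing to the quotients is compatible with morphisms, i.e. that for every morphism $\phi\colon c\to c'$ in $\Ca$ the linear map $\mathcal{F}_O(\phi)$ sends the left radical into the left radical, $\mathcal{F}_O(\phi)(\lR_c)\subseteq\lR_{c'}$, and dually that $\mathcal{F}^{co}_O(\phi)(\rR_{c'})\subseteq\rR_c$. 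Once these two inclusions are established, $\mathcal{F}_O(\phi)$ and $\mathcal{F}^{co}_O(\phi)$ descend to well-defined linear maps between the quotients, which I would denote $\mathcal{F}_{O,I}(\phi)$ and $\mathcal{F}^{co}_{O,I}(\phi)$.

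The key step, and really the only computation, is an adjointness identity relating the two families of morphism-maps through the pairings:
\[
\bigl(\mathcal{F}_O(\phi)(v),\,w\bigr)_{c'}=\bigl(v,\,\mathcal{F}^{co}_O(\phi)(w)\bigr)_c
\qquad\text{for all }v\in\mathcal{F}_O(c),\ w\in\mathcal{F}^{co}_O(c').
\]
By bilinearity it suffices to check this on basis elements $v=\delta_f$ with $f\in\Hom(O,c)$ and $w=\delta_g$ with $g\in\Hom(c',O)$. Recall that $\mathcal{F}_O(\phi)$ acts by postcomposition, $\delta_f\mapsto\delta_{\phi\circ f}$, while $\mathcal{F}^{co}_O(\phi)$ acts by precomposition, $\delta_g\mapsto\delta_{g\circ\phi}$. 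Unwinding the definition of the pairing, the left-hand side equals $I\bigl(g\circ(\phi\circ f)\bigr)$ and the right-hand side equals $I\bigl((g\circ\phi)\circ f\bigr)$, and these agree by associativity of composition in $\Ca$.

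With the adjointness identity in hand the two inclusions are immediate. If $v\in\lR_c$, then for every $w\in\mathcal{F}^{co}_O(c')$ one has $(\mathcal{F}_O(\phi)(v),w)_{c'}=(v,\mathcal{F}^{co}_O(\phi)(w))_c=0$, since $v$ pairs trivially with everything; hence $\mathcal{F}_O(\phi)(v)\in\lR_{c'}$. Symmetrically, if $w\in\rR_{c'}$ then $(v,\mathcal{F}^{co}_O(\phi)(w))_c=(\mathcal{F}_O(\phi)(v),w)_{c'}=0$ for all $v$, so $\mathcal{F}^{co}_O(\phi)(w)\in\rR_c$. Finally, preservation of identities and of composition for the quotient assignments is inherited from that of $\mathcal{F}_O$ and $\mathcal{F}^{co}_O$: since the quotient projections are surjective and the relations $\mathcal{F}_O(\mathrm{id})=\mathrm{id}$ and $\mathcal{F}_O(\psi\circ\phi)=\mathcal{F}_O(\psi)\circ\mathcal{F}_O(\phi)$ hold before quotienting, they continue to hold for the induced maps, and likewise in the contravariant case. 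I expect no genuine obstacle here; the only point requiring care is tracking the variance of $\mathcal{F}^{co}_O$ correctly, so that the adjointness identity is stated with the pairings evaluated at the right objects.
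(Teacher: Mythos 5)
Your proposal is correct and follows essentially the same route as the paper: both reduce the statement to showing that $\mathcal{F}_O(\phi)$ maps $\lR_c$ into $\lR_{c'}$ (and dually), and both establish this via the adjointness identity $(\mathcal{F}_O(\phi)(v),w)_{c'}=(v,\mathcal{F}^{co}_O(\phi)(w))_c$, which on basis elements is just associativity of composition. Your write-up merely makes explicit a few points the paper leaves implicit (the dual inclusion and the inheritance of functoriality by the quotients).
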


\begin{proof}
It remains to be shown that $\mathcal{F}_{O,I}$ and $\mathcal{F}^{co}_{O,I}$ are well-defined on morphisms. We present the proof for $\mathcal{F}_{O,I}$. It is enough to show that for all morphisms $h\in \Hom_\Ca(c,c')$ the image of the radical $\lR_c\subset  \mathcal{F}_{O}(c)$ under $\mathcal{F}_{O}[h]$ is contained in $\lR_{c'}\subset  \mathcal{F}_{O}(c')$.  
For $r=\sum_i a_i\delta_{f_i} \in \lR_c$ and all $g\in \text{Hom}(c',O)$,
we find  
\begin{align*}
(\mathcal{F}_{O}[h](r),\delta_g)_{c'}&=\sum_i a_i I(g\circ (h \circ f_i))= \sum_i a_i (\delta_{f_i}, \mathcal{F}^{co}_{O}[h](\delta_g))_c=(r, \mathcal{F}^{co}_{O}[h](\delta_g))_c =0\; . 
\end{align*}
\end{proof}

\begin{definition}
We call the functors $\mathcal{F}_{O,I}$ and $\mathcal{F}^{co}_{O,I}$
a pair of \gGNS functors for the category $\Ca$ and the
state $I:\mathrm{End}(O)\to\K$.
\end{definition}

\begin{remarks}
\begin{enumerate}
\item
Exchanging $\Ca$ and its opposed category $\Ca^{op}$  exchanges
the \gGNS functors
$\mathcal{F}_{O,I}$ and $\mathcal{F}^{co}_{O,I}$. 
For this reason, it usually suffices to prove statements
for the \gGNS functor $\mathcal{F}_{O,I}$.

\item
A $\dagger$-structure on a category $\Ca$ is a involutive 
functor  $\dagger: \Ca \rightarrow \Ca^{op}$ which is the 
identity on objects. 
A state $I\colon \End_\Ca(O) \rightarrow \K$  is compatible with
a $\dagger$-structure on $\Ca$, if $I(f^\dagger)=I(f)$
for all $f \in \text{End}_\Ca (O)$. (In the case of the
field of complex numbers, $\K=\C$, a sesquilinear variant
of the condition, $I(f^\dagger)=I(f)^*$, can be considered
as well.)

For a category with $\dagger$-structure and a compatible state
$I$, we define for all $c\in \Ca$ a pairing by 
$$\begin{array}{rlll}
(\cdot,\cdot)_{c,\dagger } \colon & \mathcal{F}_{O}(c)
\otimes_\K \mathcal{F}_{O}(c)&\rightarrow& \K \\
& \delta_f \otimes \delta_g &\mapsto& I(g^\dagger \circ f)\; . 
\end{array} $$
We then have $\mathcal{F}_{O,I}(c)= \mathcal{F}_{O}(c)/ \lR((\cdot,\cdot)_{c,\dagger })$ and $\mathcal{F}^{co}_{O,I} = \mathcal{F}_{O,I}\circ \dagger$.
\end{enumerate}
\end{remarks}

\begin{examples}
\begin{enumerate}
\item
A $C^\star$-algebra $A$ can be seen as a one object $\C$-linear 
category $\bullet // A$ together with a $\dagger$-structure $ ^\star \colon \bullet  // A \rightarrow (\bullet  //A)^{op}$. A classical state $\tau \colon A
=\mathrm{End}(\bullet) 
\rightarrow \K $ on $A$ is a state
rooted in $\bullet$; it leads to a vector space 
$\mathcal{F}_{\bullet ,\tau}(\bullet)$ endowed with a scalar product $\langle \cdot ,\cdot \rangle_\bullet $. (In general $\mathcal{F}_{\bullet,\tau}(\bullet)$ is not a Hilbert space; by taking its completion, one
obtains a Hilbert space together with an action of $A$. 
This is the the classical Gelfand-Naimark-Segal (GNS) 
construction.) 

The generalization of this example to $C^\star$-categories
is straightforward, see \cite{ghez1985}.
\item
Following \cite{BHMV}, consider a category whose objects are
closed oriented two dimensional manifolds with $p_1$-structure and an even number of embedded arcs and where the morphisms are cobordisms with $p_1$-structure and ribbon links matching the arcs on the boundary. 

Then a state $\langle\;\;\rangle_p$ rooted in
$\emptyset$  can be obtained from the Kauffman bracket and 
a primitive $2p$-th root of unity. In
this context, the role of the \gGNS functors is to provide
vector spaces assigned to codimension-one manifolds. The main theorem of \cite{BHMV} can be formulated using the language of this note 
as: The \gGNS functor corresponding to $\langle\;\;\rangle_p$ 
is symmetric monoidal for $p\geq 3$.

\item
Section \ref{sec:DW} contains a discussion of three-dimensional topological
field theories in terms of \gGNS functors.
\end{enumerate}
\end{examples} 

Inspired by the second example, we now assume that the
category $\Ca$ has
the structure of a
monoidal category $(\mathcal{C},\otimes_\Ca ,\mathbb{I},a,r,l)$. 
Here, $\mathbb{I}$ is the monoidal unit, $a$ the associator
and $r$ and $l$ are unit constraints.
In this case, the monoidal unit $O= \mathbb{I}$ is a natural
choice. Then $\End_\Ca(\mathbb{I})$ has the additional structure
of a unitary monoid.

\begin{proposition}
\label{monoidmorph}
There is a linear isomorphism $\varphi_0 \colon \mathcal{F}_{\mathbb{I},I}(\mathbb{I}) \rightarrow \K $ sending 
$\delta_{\text{id}_{\mathbb{I}}}$ to $1\in \K$,
if and only if  the state 
$I\colon \text{End}_\Ca(\mathbb{I})\rightarrow I$ is a
morphism of unitary monoids.
\end{proposition}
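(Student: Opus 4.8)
The plan is to produce the isomorphism explicitly as the descent of the evaluation map $\delta_f \mapsto I(f)$, and to read off both implications from the characterization of the left radical $\lR_\mathbb{I}$. Concretely, I would first consider the linear map $\tilde\varphi_0 \colon \mathcal{F}_\mathbb{I}(\mathbb{I}) \to \K$ defined on the canonical basis by $\delta_f \mapsto I(f)$ for $f \in \End_\Ca(\mathbb{I})$. The first observation is that $\tilde\varphi_0$ automatically annihilates the radical: if $r = \sum_i a_i \delta_{f_i} \in \lR_\mathbb{I}$, then pairing $r$ against $\delta_{\text{id}_\mathbb{I}}$ in the second slot gives $\sum_i a_i I(\text{id}_\mathbb{I} \circ f_i) = \sum_i a_i I(f_i) = \tilde\varphi_0(r) = 0$. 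Hence $\tilde\varphi_0$ descends to a well-defined linear map $\varphi_0 \colon \mathcal{F}_{\mathbb{I},I}(\mathbb{I}) \to \K$ with $\varphi_0(\delta_{\text{id}_\mathbb{I}}) = I(\text{id}_\mathbb{I})$. This is the canonical candidate for the claimed isomorphism, and it is the map I would show is an isomorphism normalized by $\delta_{\text{id}_\mathbb{I}} \mapsto 1$ exactly when $I$ is unital and multiplicative.

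For the direction assuming $I$ is a morphism of unitary monoids, I would argue as follows. Unitality $I(\text{id}_\mathbb{I}) = 1$ gives $\varphi_0(\delta_{\text{id}_\mathbb{I}}) = 1$, which is both the required normalization and enough for surjectivity. For injectivity it suffices to check $\ker \tilde\varphi_0 \subseteq \lR_\mathbb{I}$, the reverse inclusion being the computation above. Here multiplicativity does the work: if $\tilde\varphi_0(\sum_i a_i \delta_{f_i}) = \sum_i a_i I(f_i) = 0$, then for every $g \in \End_\Ca(\mathbb{I})$ we get $\sum_i a_i I(g \circ f_i) = I(g)\sum_i a_i I(f_i) = 0$, so the element lies in $\lR_\mathbb{I}$ and is zero in the quotient. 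Thus $\varphi_0$ is a linear isomorphism with the required normalization.

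For the converse, suppose $\varphi_0$ is a linear isomorphism with $\varphi_0(\delta_{\text{id}_\mathbb{I}}) = 1$. Unitality is immediate, since $I(\text{id}_\mathbb{I}) = \varphi_0(\delta_{\text{id}_\mathbb{I}}) = 1$. For multiplicativity I would exploit injectivity in the form $\ker \tilde\varphi_0 = \lR_\mathbb{I}$: for an arbitrary $f$ the element $\delta_f - I(f)\,\delta_{\text{id}_\mathbb{I}}$ is killed by $\tilde\varphi_0$ (using $I(\text{id}_\mathbb{I}) = 1$), hence lies in $\lR_\mathbb{I}$, hence pairs trivially with every $\delta_g$; writing this pairing out yields $I(g \circ f) - I(f)\,I(g) = 0$, that is, multiplicativity of $I$. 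Together with unitality this shows $I$ is a morphism of unitary monoids.

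The main obstacle, and essentially the only real content, is the equivalence between injectivity of $\varphi_0$ and multiplicativity of $I$; the remaining points (well-definedness, surjectivity, the unit condition) are bookkeeping. I would also be explicit that $\varphi_0$ is to be taken as this canonical evaluation map rather than an unspecified linear isomorphism, since it is exactly the identification $\varphi_0 = (\delta_f \mapsto I(f))$ that feeds the radical characterization into the multiplicativity step; this is where the structure of the pairing, rather than mere dimension counting for $\mathcal{F}_{\mathbb{I},I}(\mathbb{I})$, is genuinely used.
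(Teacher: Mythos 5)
Your proof is correct and follows essentially the same route as the paper's: multiplicativity forces every class $[\delta_f]$ to be proportional to $[\delta_{\text{id}_{\mathbb{I}}}]$, so the quotient is one-dimensional and the evaluation map $\delta_f\mapsto I(f)$ descends to the required isomorphism. You go slightly further than the paper in two useful respects: you actually carry out the converse direction (which the paper leaves to the reader), and you correctly flag that for that direction $\varphi_0$ must be read as the canonical evaluation map $[\delta_f]\mapsto I(f)$ rather than an arbitrary isomorphism, since otherwise the unit condition $I(\text{id}_{\mathbb{I}})=1$ would not follow from the hypothesis.
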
 

\begin{proof}
The multiplicativity of $I$ implies for all $f,g\in \text{End}_\Ca (\mathbb{I})$ with $I(g)\neq 0$ the relation  \[ [\delta_f]= \frac{I(f)}{I(g)}[\delta_g] \in \mathcal{F}_{\mathbb{I},I}(\mathbb{I}), \]  where we denote by $[\cdot]$ the equivalence classes in $\mathcal{F}_{\mathbb{I},I}(\mathbb{I})$. $\mathcal{F}_{\mathbb{I},I}(\mathbb{I})$ is not zero dimensional, since $I(\text{id}_\mathbb{I})=1$. We leave the other direction to the reader.    
\end{proof}

We will from now on assume that the state
$I\colon \text{End}_\Ca(\mathbb{I})\rightarrow I$ is a
morphism of unitary monoids. (This assumption typical does
not hold for GNS\ states in quantum mechanics.)
In general, the \gGNS functors are not 
necessarily monoidal; rather a weaker statement holds true:

\begin{theorem}
\label{Theorem monoidal}
Let $(\mathcal{C},\otimes_\Ca ,\mathbb{I},a,r,l)$ be 
monoidal category and $I\colon \text{End}_\Ca(\mathbb{I})\rightarrow \K$ a morphism of unitary monoids.
\begin{enumerate}
\item The natural transformation $\varphi_2 \colon 
\mathcal{F}_{\mathbb{I},I}(\cdot)\otimes_\K 
\mathcal{F}_{\mathbb{I},I}(\cdot)\Rightarrow 
\mathcal{F}_{\mathbb{I},I}(\cdot \otimes_\Ca \cdot )$ defined for $c,c'\in \Ca$ by 
\[ \begin{array}{rlll}
\varphi_{2,c,c'} \colon &\mathcal{F}_{\mathbb{I},I}(c)
\otimes_\K \mathcal{F}_{\mathbb{I},I}(c')&\rightarrow &
\mathcal{F}_{\mathbb{I},I}(c\otimes_\Ca c')\\
&\delta_f\otimes \delta_g &\mapsto &
\delta_{(f\otimes g) \circ r^{-1}}
\end{array}
\] 
is well-defined.
The morphism $\varphi_0$ from proposition
\ref{monoidmorph} and the natural transformation
$\varphi_2$ endow the \gGNS functor $\mathcal{F}_{\mathbb{I},I}$ with the 
structure of a lax monoidal functor. \\
\item The natural transformation $\varphi_2$ is
injective. Furthermore, it is an isomorphism, 
if and only if there exist for all pairs of objects $c,c'\in 
\mathcal{C}$ and any morphism 
$f \in \text{Hom}(\mathbb{I},c\otimes_\mathcal{C} c')$ a 
finite collection of  morphisms $f_{c,i}\in 
\text{Hom}(\mathbb{I},c)$, $f_{c',i}\in \text{Hom}(\mathbb{I},c')$ and 
scalars $a_i\in \mathbb{K}$, such that 
\begin{align}
I(g \circ f)=\sum_i a_i I( g \circ(f_{i,c}\otimes_\Ca f_{i,c'}))
\label{eqationBedingungUniversalTensorfunct}
\end{align} 
for all $g\in \text{Hom}(c\otimes_\mathcal{C} c',\mathbb{I})$.
\end{enumerate}
\end{theorem}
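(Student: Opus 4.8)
The plan is to handle the two parts separately, reducing part (1) to naturality of the unit constraints and part (2) to the non-degeneracy of the pairings $\langle\cdot,\cdot\rangle_c$. For the well-definedness of $\varphi_2$ I would start from the obvious bilinear map on free vector spaces, $\delta_f\otimes\delta_g\mapsto\delta_{(f\otimes g)\circ r_\mathbb{I}^{-1}}$, and show, just as in the proof of the Lemma, that it carries $\lR_c\otimes\mathcal{F}_\mathbb{I}(c')+\mathcal{F}_\mathbb{I}(c)\otimes\lR_{c'}$ into $\lR_{c\otimes_\Ca c'}$. Concretely, for $r=\sum_i a_i\delta_{f_i}\in\lR_c$, a fixed $g\in\Hom(\mathbb{I},c')$, and an arbitrary test morphism $h\in\Hom(c\otimes_\Ca c',\mathbb{I})$, I would factor $f_i\otimes g=(\mathrm{id}_c\otimes g)\circ(f_i\otimes\mathrm{id}_\mathbb{I})$ and use naturality of the right unit constraint in the form $(f_i\otimes\mathrm{id}_\mathbb{I})\circ r_\mathbb{I}^{-1}=r_c^{-1}\circ f_i$ to rewrite $h\circ(f_i\otimes g)\circ r_\mathbb{I}^{-1}=k\circ f_i$ with $k:=h\circ(\mathrm{id}_c\otimes g)\circ r_c^{-1}\in\Hom(c,\mathbb{I})$. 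Then $\sum_i a_i I(k\circ f_i)=(r,\delta_k)_c=0$ gives membership in $\lR_{c\otimes_\Ca c'}$, and the symmetric statement for the second tensor factor follows identically after using the left unit constraint together with the coherence identity $l_\mathbb{I}=r_\mathbb{I}$.

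Naturality of $\varphi_2$ is then immediate from its definition on generators and the formula $\mathcal{F}_{\mathbb{I},I}(h)[\delta_f]=[\delta_{h\circ f}]$, while $\varphi_0$ is the isomorphism of Proposition \ref{monoidmorph}. What remains for part (1) is the associativity hexagon and the two unit triangles for $(\mathcal{F}_{\mathbb{I},I},\varphi_0,\varphi_2)$; evaluated on a generator these reduce to an equality of morphisms $\mathbb{I}\to c\otimes_\Ca c'\otimes_\Ca c''$ assembled from the associator $a$ and the constraints $r,l$, which holds by Mac Lane coherence. I expect this coherence bookkeeping to be the most tedious step, even though it is conceptually routine.

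For part (2) the pairings do the work. Dually to $\varphi_2$ I would introduce the map $\psi_{2,c,c'}\colon\mathcal{F}^{co}_{\mathbb{I},I}(c)\otimes_\K\mathcal{F}^{co}_{\mathbb{I},I}(c')\to\mathcal{F}^{co}_{\mathbb{I},I}(c\otimes_\Ca c')$, $\delta_g\otimes\delta_{g'}\mapsto\delta_{r_\mathbb{I}\circ(g\otimes g')}$, well-defined by the same argument (or by the duality of Remark 1). The crux is the matrix-coefficient identity
\[
\langle\varphi_2(\delta_f\otimes\delta_{f'}),\psi_2(\delta_g\otimes\delta_{g'})\rangle_{c\otimes_\Ca c'}=\langle\delta_f,\delta_g\rangle_c\,\langle\delta_{f'},\delta_{g'}\rangle_{c'},
\]
obtained by rewriting the left-hand side as $I\big(r_\mathbb{I}\circ((g\circ f)\otimes(g'\circ f'))\circ r_\mathbb{I}^{-1}\big)$, applying the Eckmann-Hilton identity $r_\mathbb{I}\circ(\alpha\otimes\beta)\circ r_\mathbb{I}^{-1}=\alpha\circ\beta$ for $\alpha,\beta\in\End_\Ca(\mathbb{I})$ (again via naturality of the constraints and $l_\mathbb{I}=r_\mathbb{I}$), and then multiplicativity of $I$. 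Hence $\varphi_2$ intertwines $\langle\cdot,\cdot\rangle_{c\otimes_\Ca c'}$ with the tensor-product pairing $\langle\cdot,\cdot\rangle_c\otimes\langle\cdot,\cdot\rangle_{c'}$, which is again non-degenerate; injectivity follows, since $\varphi_2(x)=0$ forces $(\langle\cdot,\cdot\rangle_c\otimes\langle\cdot,\cdot\rangle_{c'})(x,y)=\langle\varphi_2(x),\psi_2(y)\rangle=0$ for all $y$, whence $x=0$.

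Finally, as $\varphi_2$ is always injective it is an isomorphism exactly when it is surjective, and since $\mathcal{F}_{\mathbb{I},I}(c\otimes_\Ca c')$ is spanned by the classes $[\delta_f]$ with $f\in\Hom(\mathbb{I},c\otimes_\Ca c')$, surjectivity is equivalent to each such class lying in the image of $\varphi_2$. Writing $[\delta_f]=\sum_i a_i[\delta_{(f_{i,c}\otimes f_{i,c'})\circ r_\mathbb{I}^{-1}}]$ and pairing against every $\delta_g$, $g\in\Hom(c\otimes_\Ca c',\mathbb{I})$, this identity in the quotient holds precisely when $I(g\circ f)=\sum_i a_i I(g\circ(f_{i,c}\otimes f_{i,c'}))$ for all $g$, the suppressed $r_\mathbb{I}^{-1}$ providing the identification under which the right-hand argument is an endomorphism of $\mathbb{I}$. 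This is exactly condition \eqref{eqationBedingungUniversalTensorfunct}, and reading the equivalence in both directions establishes the criterion.
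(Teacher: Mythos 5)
Your proof is correct and follows essentially the same route as the paper: well-definedness of $\varphi_2$ via naturality of the unit constraints and the radical argument, injectivity via compatibility of $\varphi_2$ and its dual with the (non-degenerate) product pairing, and the surjectivity criterion read off by pairing $[\delta_f]$ against all $g\in\Hom(c\otimes_\Ca c',\mathbb{I})$. You even supply a detail the paper only asserts, namely the matrix-coefficient identity $\langle\varphi_2(\delta_f\otimes\delta_{f'}),\varphi_2^{co}(\delta_g\otimes\delta_{g'})\rangle=I(g\circ f)\,I(g'\circ f')$ via the Eckmann--Hilton computation and multiplicativity of $I$.
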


In the definition of $\varphi_2$, we might have alternatively
used $l^{-1}$ instead of $r^{-1}$; both morphisms however
agree on the monoidal unit $\mathbb{I}$.

\begin{proof}
\begin{enumerate}
\item 
We show that the natural transformation 
$\varphi_2$ is well-defined. Consider an arbitrary element 
$r=\sum_i a_i\delta_{f_i}\in \lR_c$ with 
$f_i \in \text{Hom}(\mathbb{I},c)$. For all $g\in \text{Hom}(\mathbb{I},c')$ and $h\in \text{Hom}(c\otimes_\Ca c',\mathbb{I})$ we can calculate
 \begin{align*}
 (\varphi_{2,X,Y}(r\otimes \delta_g),\delta_h)_{c\otimes_\Ca c'}&=\sum a_i I(h \circ (f_i\otimes g)\circ r^{-1})=\sum a_i I(h \circ (\text{id}_X\otimes g)\circ (f_i\otimes \text{id}_Y)\circ r^{-1})\\ &=
\sum a_i I(h \circ (\text{id}_X\otimes g)\circ r^{-1}\circ f_i)=(r,\delta_{h \circ (\text{id}_X\otimes g)\circ r^{-1}})_c=0 \;.
 \end{align*} We can use the same argument for $r'\in \lR_{c'}$. Using linearity this proves that $\varphi_2$ is well-defined. \\
It is straightforward using the definition of a monoidal category to verify that $\varphi_0 $ and $\varphi_2$ endow $\mathcal{F}_{\mathbb{I},I}$ with the structure of a lax monoidal functor. \\
\item 
We define a non degenerate bilinear pairing $\langle \cdot,\cdot\rangle_{c,c'}\colon (\mathcal{F}_{\mathbb{I},I}(c)\otimes \mathcal{F}_{\mathbb{I},I}(c'))\otimes_\K(\mathcal{F}^{co}_{\mathbb{I},I}(c)\otimes_\K \mathcal{F}^{co}_{\mathbb{I},I}(c'))\rightarrow \K$ by 
\[
\langle a\otimes_K b, m \otimes_K n \rangle_{c,c'} = \langle a,m \rangle_c \cdot \langle b,n \rangle_{c'} \; 
\forall a\in \mathcal{F}_{\mathbb{I},I}(c),\; b\in \mathcal{F}_{\mathbb{I},I}(c'),\; m\in \mathcal{F}^{co}_{\mathbb{I},I}(c) \text{ and } n\in \mathcal{F}^{co}_{\mathbb{I},I}(c')  \;,
\] 
for all $c,c'\in \Ca$.
 
The natural transformation $\varphi_2$ and its dual analogue $\varphi_2^{co}\colon \mathcal{F}_{\mathbb{I},I}^{co}(\cdot)\otimes_\K \mathcal{F}_{\mathbb{I},I}^{co}(\cdot) \Rightarrow \mathcal{F}_{\mathbb{I},I}^{co}(\cdot\otimes_\Ca \cdot)$ define a map 

\[ 
\phi_{c,c'}:(\mathcal{F}_{\mathbb{I},I}(c)\otimes_\K \mathcal{F}_{\mathbb{I},I}(c'))
\otimes_\K(\mathcal{F}^{co}_{\mathbb{I},I}(c)\otimes \mathcal{F}^{co}_{\mathbb{I},I}(c'))
\rightarrow \mathcal{F}_{\mathbb{I},I}(c\otimes_\Ca c')\otimes_\K \mathcal{F}^{co}_{\mathbb{I},I}(c\otimes_\Ca c') \;.   
\]
This map preserves the non degenerate bilinear pairing, i.e. 

\[
\langle a\otimes_\K b,m\otimes_K n \rangle_{c,c'}= \langle \varphi_2(a\otimes_\K b),\varphi_2^{co}(m\otimes_\K n) \rangle_{c\otimes_\Ca c'}\;. 
\]

This implies that $\varphi_2$ is injective. 

Equation \eqref{eqationBedingungUniversalTensorfunct} implies 
\begin{align*}
[\delta_f]= \sum_i \varphi_2(a_i [\delta_{f_{ic}}]\otimes_\K [\delta_{f_{ic'}}]),
\end{align*}
 hence $\varphi_2$ is surjective if equation \eqref{eqationBedingungUniversalTensorfunct} holds.
 Obviously, equation \eqref{eqationBedingungUniversalTensorfunct} is true if $\varphi_2$ is surjective.  

\end{enumerate}
\end{proof}

The verification of the condition ensuring that $\varphi_2$ is an
isomorphism can be quite complicated in concrete examples. 

The following definition slightly generalizes the notion of a non-degenerate topological field theory \cite[\RM{3}3.1]{turaev2010quantum} and a cobordism generated functor \cite[p.886]{BHMV}:

\begin{definition}
Let $\Ca$ be a category and $O\in \Ca$. A functor $\mathcal{F}\colon \Ca \rightarrow \vs $ is $O$-exhausted, 
if $\mathcal{F}(c)= \text{span}_\mathbb{K}\{ \text{Im}(\mathcal{F}(f)) \mid f\in \text{Hom}(O,c) \}$ for all $c\in \Ca$. 
\end{definition} 

\gGNS\ functors based on an $O$-rooted state are obviously 
$O$-exhausted. Conversely, a pair of $O$-exhausted functors
related by a non-degenerate bilinear pairing can be recognized
as a pair of \gGNS\ functors:

\begin{proposition}
\label{uniquenesUC}
Let $(\mathcal{C},\otimes_\Ca ,\mathbb{I},a,r,l)$ be a monoidal category and $I\colon \text{End}_\Ca(\mathbb{I})\rightarrow \K$ a morphism of
unitary monoids. Let
$F \colon \mathcal{C}\rightarrow \vs$ and 
$F^{co} \colon \mathcal{C}^{op}\rightarrow \vs$ be 
a pair of $\mathbb{I}$-exhausted functors 
$F \colon \mathcal{C}\rightarrow \vs$ and 
$F^{co} \colon \mathcal{C}^{op}\rightarrow \vs$, which are related
by non-degenerate bilinear pairings \[\widetilde{\langle \cdot,
\cdot  \rangle}_c \colon F(c)\otimes F^{co}(c) \rightarrow \mathbb{K}\] 
for all $c\in \Ca$. Suppose furthermore that there are isomorphisms
$$\varphi\colon \K \rightarrow F(\mathbb{I})
\quad\text{ and }\quad \varphi^{co}\colon \K \rightarrow 
F^{co}(\mathbb{I})$$ 
compatible with the morphism $I$ in the sense that for all
$c\in \Ca$ and all $f\in \text{Hom}(\mathbb{I},c)$ and $g\in
\text{Hom}(c,\mathbb{I})$, we have
\[I(g \circ f)=\widetilde{\langle F(f)[\varphi(1)],
F^{co}(g)[\varphi^{co}(1)] \rangle}_c \,\,. \]  
\begin{enumerate}
\item 
Then there are natural isomorphisms  $\alpha \colon  \mathcal{F}_{\mathbb{I},I} \Rightarrow F$ and $\alpha^{co} \colon \mathcal{F}_{\mathbb{I},I}^{co}\Rightarrow  F^{co} $
 to the \gGNS functors for the state $I$.
 \item
These natural transformations are monoidal, if all functors involved are monoidal, with the isomorphisms $\varphi$ and $\varphi^{co}$ 
as part of the monoidal data.
\end{enumerate}
\end{proposition}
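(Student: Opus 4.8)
The plan is to construct $\alpha$ objectwise by first defining it on the free vector spaces and then descending to the GNS quotient. For each $c\in\Ca$ I would define a linear map $\tilde\alpha_c\colon\mathcal{F}_{\mathbb{I}}(c)\to F(c)$ on generators by $\delta_f\mapsto F(f)[\varphi(1)]$ for $f\in\Hom(\mathbb{I},c)$. The compatibility hypothesis rewrites the defining GNS pairing as $(\delta_f,\delta_g)_c=I(g\circ f)=\widetilde{\langle \tilde\alpha_c(\delta_f),F^{co}(g)[\varphi^{co}(1)]\rangle}_c$, so for any $r=\sum_i a_i\delta_{f_i}\in\lR_c$ we obtain $\widetilde{\langle \tilde\alpha_c(r),F^{co}(g)[\varphi^{co}(1)]\rangle}_c=0$ for every $g\in\Hom(c,\mathbb{I})$. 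Since $F^{co}$ is $\mathbb{I}$-exhausted, the vectors $F^{co}(g)[\varphi^{co}(1)]$ span $F^{co}(c)$, and non-degeneracy of $\widetilde{\langle\cdot,\cdot\rangle}_c$ then forces $\tilde\alpha_c(r)=0$. Hence $\tilde\alpha_c$ factors through the quotient and yields $\alpha_c\colon\mathcal{F}_{\mathbb{I},I}(c)\to F(c)$.

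To see that $\alpha_c$ is an isomorphism I would read off surjectivity directly from the $\mathbb{I}$-exhaustion of $F$, since the images $F(f)[\varphi(1)]$ already span $F(c)$, and establish injectivity by running the radical computation in reverse: if $\alpha_c([\sum_i a_i\delta_{f_i}])=0$, the same pairing identity gives $\sum_i a_i I(g\circ f_i)=0$ for all $g$, so the representing element lies in $\lR_c$ and its class vanishes. Naturality is immediate on generators, because for $h\in\Hom(c,c')$ both $F(h)\circ\alpha_c$ and $\alpha_{c'}\circ\mathcal{F}_{\mathbb{I},I}(h)$ send $[\delta_f]$ to $F(h\circ f)[\varphi(1)]$ by functoriality of $F$. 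The entirely dual argument, exchanging the roles of $F$ and $F^{co}$ and of the two exhaustion hypotheses, produces $\alpha^{co}$ and proves part~(1).

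For the monoidal refinement in part~(2) I would verify the two coherence conditions for a monoidal natural transformation, checked on generators. The unit condition $\alpha_\mathbb{I}\circ\varphi_0^{-1}=\varphi$ is direct, since $\varphi_0^{-1}(1)=[\delta_{\mathrm{id}_\mathbb{I}}]$ maps under $\alpha_\mathbb{I}$ to $F(\mathrm{id}_\mathbb{I})[\varphi(1)]=\varphi(1)$. For multiplicativity I would evaluate both composites on $[\delta_f]\otimes[\delta_g]$: using the explicit $\varphi_2$ from Theorem~\ref{Theorem monoidal}, the route through $\varphi_2$ gives $F((f\otimes g)\circ r_\mathbb{I}^{-1})[\varphi(1)]=F(f\otimes g)\bigl(F(r_\mathbb{I}^{-1})[\varphi(1)]\bigr)$, while naturality of the multiplicative structure map $F_2$ of the monoidal functor $F$ turns the route through $\alpha_c\otimes\alpha_{c'}$ into $F(f\otimes g)\bigl(F_{2,\mathbb{I},\mathbb{I}}(\varphi(1)\otimes\varphi(1))\bigr)$. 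These agree once I identify $F(r_\mathbb{I}^{-1})[\varphi(1)]=F_{2,\mathbb{I},\mathbb{I}}(\varphi(1)\otimes\varphi(1))$, which follows from the right-unit axiom of the monoidal functor $F$ evaluated at $c=\mathbb{I}$. The dual computation handles $\alpha^{co}$.

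The main obstacle I anticipate is purely bookkeeping in the monoidal step: keeping the unit constraints of $\Ca$ and of $\vs$ distinct and confirming that $\varphi$ serving as the functor's unit datum is exactly what makes $F_{2,\mathbb{I},\mathbb{I}}(\varphi(1)\otimes\varphi(1))$ coincide with $F(r_\mathbb{I}^{-1})[\varphi(1)]$. Part~(1), by contrast, is essentially forced: non-degeneracy together with $\mathbb{I}$-exhaustion leave no freedom in $\alpha$, which is precisely the conceptual content of this \emph{uniqueness} statement.
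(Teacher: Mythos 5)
Your proposal is correct and follows the same route as the paper's own proof: define $\alpha$ on generators by $\delta_f\mapsto F(f)[\varphi(1)]$, use the compatibility of the pairings together with $\mathbb{I}$-exhaustion and non-degeneracy to descend to the quotient and conclude bijectivity, then check naturality and the monoidal coherences on generators. You have simply spelled out the details (the radical computation in both directions and the unit axiom identifying $F(r_{\mathbb{I}}^{-1})[\varphi(1)]$ with $F_{2,\mathbb{I},\mathbb{I}}(\varphi(1)\otimes\varphi(1))$) that the paper declares straightforward.
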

\begin{proof}
We define a natural transformation $\alpha ' \colon  \mathcal{F}_{\mathbb{I},I}\Rightarrow F$ by $\alpha [\delta_f]=F(f)[\varphi (1)]$ and $\alpha_{co}'$ in an analogous way. These maps are surjective and compatible with the bilinear pairing $(\cdot,\cdot)_c$ constructed from $I$. For this reason we get induced natural transformations $\alpha \colon \mathcal{F}_{\mathbb{I},I} \Rightarrow F$ and $\alpha_{co} \colon \mathcal{F}_{\mathbb{I},I,co} \Rightarrow F_{co}$. It is straightforward to check that these are natural isomorphisms. Using the definition of a monoidal functor it is not hard to prove that,
under the assumption stated in the proposition,
these natural transformations are also monoidal.
\end{proof}

A topological field theory is a symmetric monoidal functor
$Z:\cob_{n,n-1}\to\vs$, where $\cob_{n,n-1}$ is a symmetric monoidal
category of cobordisms. To apply \gGNS functors to topological field
theories, it is important to notice that the construction
is compatible with braidings on monoidal categories:

\begin{proposition}
\label{Proposition: Braiding}
Let $(\mathcal{C},\otimes_\Ca ,\mathbb{I},a,r,l,c)$ be a braided monoidal
category with braiding $c$. 
If the \gGNS functor 
    rooted in the monoidal unit
is monoidal, then it is also braided.
\end{proposition}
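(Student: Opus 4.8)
The plan is to verify directly the one axiom that upgrades a monoidal functor to a braided one. Writing $\tau$ for the canonical flip $v\otimes w\mapsto w\otimes v$ that makes $\vs$ symmetric, I must show that for all objects $X,Y\in\Ca$ the square
\[
\mathcal{F}_{\mathbb{I},I}(c_{X,Y})\circ\varphi_{2,X,Y}
=\varphi_{2,Y,X}\circ\tau_{\mathcal{F}_{\mathbb{I},I}(X),\mathcal{F}_{\mathbb{I},I}(Y)}
\]
commutes. Since $\mathcal{F}_{\mathbb{I},I}$ is $\mathbb{I}$-exhausted, each space $\mathcal{F}_{\mathbb{I},I}(X)$ is spanned by classes $[\delta_f]$ with $f\in\Hom(\mathbb{I},X)$, so the domain of the square is spanned by elementary tensors $[\delta_f]\otimes[\delta_g]$; by bilinearity it suffices to check the identity on these generators. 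The hypothesis that the functor is monoidal is what makes $\varphi_2$ available as the structural isomorphism, but the computation itself only uses its explicit formula from Theorem \ref{Theorem monoidal}.

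Next I would evaluate both composites on a generator $[\delta_f]\otimes[\delta_g]$, with $f\in\Hom(\mathbb{I},X)$ and $g\in\Hom(\mathbb{I},Y)$. Chasing the path through $\tau$ first gives
\[
\varphi_{2,Y,X}\bigl(\tau([\delta_f]\otimes[\delta_g])\bigr)
=\varphi_{2,Y,X}\bigl([\delta_g]\otimes[\delta_f]\bigr)
=\bigl[\delta_{(g\otimes f)\circ r^{-1}}\bigr],
\]
while the path through $\varphi_{2,X,Y}$ first gives
\[
\mathcal{F}_{\mathbb{I},I}(c_{X,Y})\bigl(\varphi_{2,X,Y}([\delta_f]\otimes[\delta_g])\bigr)
=\mathcal{F}_{\mathbb{I},I}(c_{X,Y})\bigl[\delta_{(f\otimes g)\circ r^{-1}}\bigr]
=\bigl[\delta_{c_{X,Y}\circ(f\otimes g)\circ r^{-1}}\bigr].
\]
Thus the statement reduces to showing that the two morphisms $(g\otimes f)\circ r^{-1}$ and $c_{X,Y}\circ(f\otimes g)\circ r^{-1}$ in $\Hom(\mathbb{I},Y\otimes_\Ca X)$ give the same class; I would in fact prove the stronger fact that they already coincide in $\Ca$.

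The one non-formal ingredient is the behaviour of the braiding on the unit. Applying naturality of $c$ to the pair $f\colon\mathbb{I}\to X$ and $g\colon\mathbb{I}\to Y$ yields $c_{X,Y}\circ(f\otimes g)=(g\otimes f)\circ c_{\mathbb{I},\mathbb{I}}$, and the standard coherence of a braiding with the unit constraints (which gives $c_{\mathbb{I},X}=r_X^{-1}\circ l_X$, hence $c_{\mathbb{I},\mathbb{I}}=r_{\mathbb{I}}^{-1}\circ l_{\mathbb{I}}=\mathrm{id}$ because $l_{\mathbb{I}}=r_{\mathbb{I}}$) shows $c_{\mathbb{I},\mathbb{I}}=\mathrm{id}_{\mathbb{I}\otimes_\Ca\mathbb{I}}$. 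Therefore $c_{X,Y}\circ(f\otimes g)=g\otimes f$, and precomposing with $r^{-1}$ makes the two morphisms above literally equal, so a fortiori their classes agree and the square commutes on generators, hence everywhere. I expect the only point needing care to be the justification of $c_{\mathbb{I},\mathbb{I}}=\mathrm{id}$; everything else is a direct substitution into the definition of $\varphi_2$ together with functoriality and the naturality square of the braiding.
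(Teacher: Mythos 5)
Your proof is correct and follows essentially the same route as the paper: both reduce the braided-functor axiom to checking it on generators $[\delta_f]\otimes[\delta_g]$ and then combine the naturality square of the braiding with its compatibility with the unit constraints to show $c_{X,Y}\circ(f\otimes g)\circ r^{-1}=(g\otimes f)\circ r^{-1}$. The paper packages the unit step as the commuting triangle $c_{\mathbb{I},\mathbb{I}}\circ r^{-1}=r^{-1}$, which is exactly your observation that $c_{\mathbb{I},\mathbb{I}}=\mathrm{id}_{\mathbb{I}\otimes\mathbb{I}}$.
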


\begin{proof}
The naturality of the braiding implies that for all
$U,V\in \Ca$ and all  $f_U\in \Hom (\mathbb{I},U)$
and $f_W\in \Hom (\mathbb{I},W)$ the diagram
\[\begin{tikzcd}
U\otimes W \arrow{rr}{c_{U\otimes W}}& & W \otimes U &\\
\mathbb{I}\otimes \mathbb{I}\arrow{u}{f_U\otimes f_W}\arrow{rr}{c_{\mathbb{I}\otimes
\mathbb{I}}} & & \mathbb{I}\otimes \mathbb{I}\arrow{u}{f_W\otimes f_U} &
\\
 & \mathbb{I} \arrow{lu}{r^{-1}} \arrow{ru}{r^{-1}} & 
\end{tikzcd} \] 
commutes. The triangle commutes,
since the braiding is compatible with the right unit constraint.
\cite[Proposition 1]{braidedCat}. With this in mind, 
it is straightforward to check that 
the \gGNS\ functor $\mathcal{F}_{\mathbb{I},I}$
is braided.  
\end{proof}

\begin{remarks}
\begin{enumerate}
\item
The characterization of \gGNS\ functors on (braided) monoidal
categories implies that an $n$-dimensional topological field theory,
i.e.\ a symmetric monoidal functor 
$Z:\cob_{n,n-1}\to\vs$, can be reconstructed from its invariant
on top-dimensional manifolds, if and only if $Z$ is 
$\emptyset$-exhausted. V. Turaev used this uniqueness to prove that every \tft of Reshetikhin-Turaev-type can be reconstructed 
\cite[Chapter \RM{3} Section 3+4 and 
Chapter \RM{4} Lemma 2.1.3]{turaev2010quantum}. 

\item
It is well-known that two-dimensional oriented
\tfts are classified by commutative Frobenius algebras (see for example \cite{kock}).
The topological field theory corresponding to the 
two-dimensional semi-simple Frobenius algebra 
$A=\text{span}(e_1,e_2)$ with multiplication
$e_i\cdot e_j= \delta_{ij}e_i$ and co-unit $\epsilon (e_1)= \epsilon (e_2)
= \lambda \in \C ^\star $ is not $\emptyset$-exhausted:
indeed, the image of any cobordism 
$\emptyset\stackrel M\to \SS^1$ is contained in the one-dimensional
subspace $\C(e_1+e_2)$. (This situation changes if
point defects are included, and one can 
then use 
the uniqueness result from proposition \ref{uniquenesUC} to show that every two-dimensional topological field theory with point defects can be reconstructed using \gGNS functors.)
\end{enumerate}
\end{remarks}

\section{Three dimensional Dijkgraaf-Witten theories}
\label{sec:DW}

We now turn to an application of \gGNS functors:
the construction of three-dimensional oriented
topological field theories. 
From now on, we will work over the field of complex
numbers.
We focus on a specific class of three-dimensional
topological field theories, so-called Dijkgraaf-Witten theories.
Dijkgraaf-Witten theories are gauge theories, based on
a finite gauge group $G$. Our goal is
to obtain them via \gGNS functors from quantities that
are motivated by principles of gauge theory. To obtain
\gGNS functors, we need:

\begin{itemize}
\item
A monoidal category. This will be a monoidal category of
three-dimensional oriented cobordisms. As usual for pure gauge theories,
to have sufficiently many observables at our disposal, we
will have to include Wilson lines.

\item
A state rooted in the monoidal unit, i.e.\ the empty set. 
This state should be thought of as the value of
a ``path integral'' on a closed 3-manifold containing
Wilson loops. For vanishing Lagrangian,
such a value is given
by counting configurations of gauge fields and thus by
a groupoid cardinality of an essentially finite category 
of bundles.
\end{itemize}

We now set up these ingredients carefully. We will assume from
now on that the gauge group $G$ is a finite {\em abelian} group. This assumption
drastically reduces the technical complexity and still 
leads to theories that provide conceptual insight.

To describe the relevant symmetric monoidal category $\cob_{G,3,2}$
including Wilson lines, 
we fix once and for all a standard torus $T^2$ embedded into 
$\R^3$ and a representative $\tau$ for every isomorphism class of principal $G$-bundles over $T^2$. An object $(\Sigma , (\mathfrak{a}_1,\tau_1),\dots ,(\mathfrak{a}_n,\tau_n))$ of $\cob_{G,3,2}$ 
consists of the following data:
\begin{itemize}
\item
A smooth closed oriented two-dimensional manifold 
$\Sigma$,

\item
A finite number of ordered 
{\em pairs} of embedded arcs, described by an embedding  
$\mathfrak{a}_i \colon [-\tfrac{1}{10},\tfrac{1}{10}] \sqcup [-\tfrac{1}{10},
\tfrac{1}{10}] \rightarrow \Sigma$. We require that 
the image of every $\mathfrak{a}_i$, i.e.\ the two arcs in a pair, is 
contained in the same connected component of $\Sigma$.
Each such pair is labelled by a representative $\tau_i$ for a 
principal $G$-bundle over $T^2$.
\end{itemize}

\begin{remarks}
\begin{enumerate}
\item
For convenience, we label the first component of a pair by a $+$ sign and the second by a $-$ sign.
\item
For an object $(\Sigma , (\mathfrak{a}_1,\tau_1),\dots ,(\mathfrak{a}_n,\tau_n))
\in \cob_{G,3,2}$ we define $-(\Sigma , (\mathfrak{a}_1,\tau_1),\dots ,(\mathfrak{a}_n,\tau_n))\in \cob_{G,3,2}$ to be the object consisting of $\Sigma $ with 
reversed orientation and arcs constructed from the arcs $\mathfrak{a}_i$ by reversing the orientation of every arc and exchanging the order 
of every pair of arcs.
\end{enumerate}
\end{remarks} 

To define morphisms from 
$(\Sigma , (\mathfrak{a}_1,\tau_1),\dots ,(\mathfrak{a}_n,\tau_n))$ to 
$(\Sigma' , (\mathfrak{a}'_1,\tau'_1),\dots ,(\mathfrak{a}'_m,\tau'_m))$,
consider smooth compact oriented three-dimensional manifolds $M$ with boundary $\partial M \cong -\Sigma \sqcup \Sigma'$, together with an embedded ribbon link \[l \colon \left( \bigsqcup_{i=1}^{n+m} [-\tfrac{1}{10},\tfrac{1}{10}]\times [0,1] \right) \sqcup \left( \bigsqcup_{j=1}^k [-\tfrac{1}{10},\tfrac{1}{10}]\times \SS^1 \right) \rightarrow M \] such that

\begin{itemize}
\item 
The intervals $[-\tfrac{1}{10},\tfrac{1}{10}]\times 0 $ and $[-\tfrac{1}{10},\tfrac{1}{10}]\times 1 $ are mapped to the negative and positive arcs in the boundary of $M$, respectively. The rest 
of the image of $l$ is contained in the interior of $M$; 
the intersection with the boundary is transversal. 

\item 
The ribbons induce the orientation opposite to the
one given by the arcs.

\item 
Every connected component of the image of $l$
is labelled with a principal $G$-bundle over the standard torus, such that they agree with the labels of the arcs on their boundary.

\item 
The ribbon link respects the pair structure of the boundary
arcs, in the sense that only the following ribbons are allowed between arcs:

\begin{itemize}
\item
A connected component of the link
connecting the two arcs of a pair in the 
outgoing boundary or the two arcs of a pair in the ingoing boundary.

\item
A pair of ribbon links connecting a pair of arcs in the
ingoing boundary to a pair of arcs in the outgoing
boundary.

\end{itemize}
\end{itemize}       
Two such manifolds $(M,l)$ and $(M',l')$ are deemed to
be equivalent if there exists an orientation preserving diffeomorphism relative boundaries
 $\varphi \colon M \rightarrow M'$ mapping $l$ to $l'$ that 
is compatible with all labels.  
 These equivalence classes define the morphisms in $\cob_{G,3,2}$.
\begin{remarks}
\begin{enumerate}
\item
Composition of morphisms is given by gluing along boundaries. 
Composition is only well-defined on equivalence classes
of three-manifolds, since there is no canonical way of 
gluing smooth manifolds and ribbons, but all ways are diffeomorphic. 
\item
$\cob_{G,3,2}$ is a symmetric monoidal category with the disjoint union of manifolds as tensor product and the empty set regarded as a two-dimensional manifold as monoidal unit.
\end{enumerate}
\end{remarks}

As a second ingredient, we need a state rooted at $\emptyset$  which is also a morphism of unitary monoids 
$I\colon \text{End}_{\cob_{G,3,2}}(\emptyset ) \rightarrow \K$, 
i.e.\ a multiplicative invariant for all smooth closed oriented 
three-dimensional manifolds $M$ with embedded labelled ribbon links. 

We consider the simplest possible gauge theories with vanishing
action. (By including a 
a 3-cocycle in $Z^3(G,\C^\times)$,
one could incorporate a
topological Lagrangian; we refrain from doing this in this
short note.) Thus the state should be determined by counting
gauge configurations, i.e.\ by the groupoid
cardinality of an essentially finite groupoid of
$G$-bundles over $M$. To set up our definition, we recall

\begin{definition}[See e.g.\ definition 4 in \cite{groupoidfication}]
Let $\mathcal{G}$ be an essentially finite groupoid and $\pi_0(\mathcal{G})$ the set of isomorphism classes of objects in $\mathcal{G}$. The groupoid cardinality of $\mathcal{G}$ is the positive rational number\begin{align}
|\mathcal{G}| = \sum_{g\in \pi_0(\mathcal{G})}\frac{1}{|\text{Aut}(g)|}\; ,
\end{align} 
where $|\text{Aut}(g)|$ is the cardinality of the automorphism group of 
a representative of $g$.
\end{definition}
\begin{wrapfigure}{r}{0.4\textwidth}
\vspace{-16pt}
\hspace{20pt}
\includegraphics[scale=0.5]{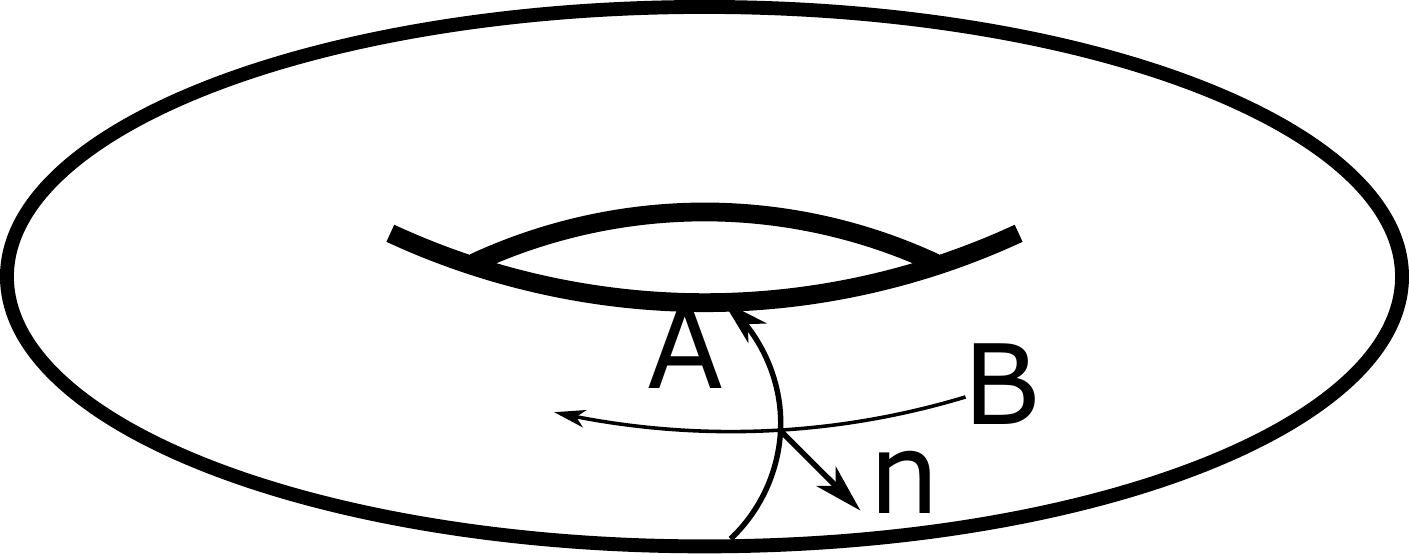}
 \caption{To orient $A$ and $B$ we chose an outwards pointing vector field $n$ on $T^2 \subset \R^3$. }
\label{Fig: Standarttorus}
\vspace{-10pt}
\end{wrapfigure}

We define for manifolds $M$ without Wilson line defects 

\begin{align}
 I_{DW,G}(M):=|\text{Bun}_G(M)|\; .
\label{Equation Invariante ohne Defekte}
\end{align}

The case of three-manifolds with Wilson lines is slightly more involved. The label of a defect Wilson line should fix the behaviour of physical gauge fields ``close" to the defect line. 

To describe how this works in detail, we fix a meridian $A$ and a longitude $B$ on the standard torus in $\R^3$. We orient these circles as pictured in figure \ref{Fig: Standarttorus}.
 An object of $\text{End}_{\cob_{G,3,2}}(\emptyset)$ is represented by a closed manifold $M$, together with a labelled ribbon link $l\colon \sqcup_{i=1}^n
 \SS^1\times [-\tfrac{1}{10},\tfrac{1}{10}]\rightarrow M$. 
\FloatBarrier  
We define $M'_l$  the three manifold with boundary
obtained from $M$ with ``small'' open solid tori around 
the interior of every component of the ribbon link $l$ removed. 
The boundary of $M'_l$ is thus diffeomorphic to a disjoint union of $n$ standard tori. We choose a diffeomorphism $\Phi \colon \sqcup_{i=1}^n T^2 \rightarrow M'_l$
 such that the $B$-cycles of the standard tori are mapped onto boundary components of $l$ and the image of the $A$-cycles is contractable in the solid tori we removed before. This diffeomorphism is unique up to isotopy.

The labels of the components of $l$ determine, via the 
pullback along $\Phi^{-1}$, a principal $G$-bundle $P$ over $\partial M'_l$. We denote by $\text{Bun}_{G,P}(M'_l)$ the groupoid of those
principal $G$-bundles over the three-manifold $M'_l$ 
which restrict to the isomorphism class of $P$ on 
$\partial M'_l$. 

\begin{definition}
\label{dw-state}
Let $G$\ be a finite group.
The Dijkgraaf-Witten state for gauge group $G$ on $\cob_{G,3,2}$
rooted in $\emptyset$ is defined
by its value on the three-manifold with Wilson lines $(M,l)$
\begin{align}\label{Equation Definition I(DW,G)}
 I_{DW,G}(M,l):= |\text{Bun}_{G,P}(M'_l)|. 
\end{align} 
\end{definition}

This extends the 
definition for manifolds without ribbon links in equation 
\eqref{Equation Invariante ohne Defekte}.
One easily checks that this state is multiplicative.
The rest of this paper is 
devoted to the computation of 
the \gGNS 
pair of functors corresponding to the Dijkgraaf-Witten 
state $ I_{DW,G}$. 

We first have to obtain a concrete understanding 
of the state $ I_{DW,G}$. The well-known classification of 
principal $G$-bundles over a connected manifold $M$ is expressed in the
following equivalence of groupoids 

\begin{align}
\text{Bun}_G(M)\cong \Hom(H_1(M),G)// G \cong H^1(M,G)//G ,
\label{Equation Aequivalence bundle Gruppenhomo}
\end{align}
where $G$ acts on a group homomorphism $\varphi : H_1(M)\rightarrow G$ by conjugation.
Since we assumed $G$ to be abelian, we can work with the first
homology group, rather than the fundamental group.
 For an abelian group $G$, we thus
identify isomorphism classes of 
principal $G$-bundles over $T^2$ with elements of 
$G\times G$. We agree that we identify
the first component with the holonomy around the $A$-cycle 
and the second component with the holonomy around
the $B$-cycle. 
\begin{example}
We determine $I_{DW,G}$ for 
an arbitrary $m$ component ribbon link $l$ in $\SS^3$,
labelled by $(a_1,b_1),\dots ,(a_m,b_m)\in G\times G$.
The first homology group of $(\SS^3)'_l$ is $\mathbb{Z}^{m}$. 
We chose $m$ cycles $A_1,\dots, A_m$ going around one component of the ribbon link.
We use the right hand rule to orient them.
These cycles form a basis of $H_1((\SS^3)'_l )$. 

The two homotopic boundaries of every component of the link 
$l$ defines an element 
$B_i$ in $H_1((\SS^3)'_l )$. 
We can express these elements in terms of the basis introduced
above:

\begin{align}
B_i = \sum_{j=1}^m m_{ij}A_j \: ,
\end{align}
with coefficients $m_{ij}\in \mathbb{Z}$. 
 
A group homomorphism $\varphi \colon H_1((\SS^3)'_l )
\rightarrow G $ 
and thus an isomorphism class of $G$-bundles
is completely described by its value on the cycles $A_i$. Compatibility with the labels is equivalent to the following system of equations:
 
\begin{align}
&a_i=\varphi (A_i)  \;\; &\forall i=1,\dots m \label{A_i=a_i} \\
&b_i= \sum_{j=1}^m m_{ij}\varphi (A_j)  \;\; &\forall i=1,\dots m .\label{b_i=...}
\end{align}

 Inserting \eqref{A_i=a_i} into \eqref{b_i=...} leads to a system of $m$ linear conditions in $G$. The invariant corresponding to the link 
$l$ in $\SS^3$ is given by 

\begin{align}
I_{DW,G}(\SS^3,l)= 
\begin{cases} 
\frac{1}{|G|} & \text{if condition \eqref{b_i=...} is satisfied} \\
0 & \text{otherwise} 
\end{cases} \: .
\label{I_DW,G}
\end{align}     

\end{example}

The computation of $I_{DW,G}(\SS^3,l)$ can be involved 
in practice, since one has to express the boundary 
of every component of the link $l$ in terms of the 
generators of $H_1((\SS^3)'_l )$. A more tractable
approach is given by local relations which leave 
the Dijkgraaf-Witten state $I_{DW,G}$ invariant.

\begin{proposition}
\label{proposition: Move1, 2 and 3}
The invariant $I_{DW,G}$ for a labelled ribbon link $l$ in $\SS^3$ does not change under the following moves:
\begin{itemize}
\item
\textbf{1. Move} (figure \ref{fig: Graphical representation move1})\textbf{:} Removing a complete right or left  twist of a component labelled 
by $(a,b)\in G\times G$ and replacing the label $(a,b)$ 
by $(a,b-a)$ or $(a,b+a)$, respectively.
  
\item
\textbf{2. Move}(figure \ref{figureSkein})\textbf{:} Replace two parallel lines corresponding to the same connected component of a ribbon link $l$ in $\SS^3$ by a sum over elements of the form in figure \ref{figureSkein}.      

\item
\textbf{3. Move} (figure \ref{fig: Graphical representation move2})\textbf{:} Removing an over-crossing followed by an under-crossing between a component $(k_1,(b_1,a_1))$ with a second not necessarily different component $(k_2,(b_2,a_2))$ and changing the labels to $(a_1,b_1\pm a_2)$ and $(a_2,b_2\pm a_1)$, where the sign depends on the relative orientation of the two components 
(compare figure \ref{fig: Graphical representation move2}).
\end{itemize}
\end{proposition}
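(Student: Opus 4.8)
The plan is to reduce all three moves to their effect on the data that actually determine the invariant. By \eqref{Equation Aequivalence bundle Gruppenhomo} together with the computation in the preceding example, for a link $l$ in $\SS^3$ the groupoid $\text{Bun}_{G,P}((\SS^3)'_l)$ is the groupoid of homomorphisms $\varphi\colon H_1((\SS^3)'_l)\to G$ satisfying the boundary conditions \eqref{A_i=a_i} and \eqref{b_i=...}, each with automorphism group $G$. Since $H_1((\SS^3)'_l)\cong\mathbb{Z}^m$ is freely generated by the meridians $A_i$ while the longitudes obey $B_i=\sum_j m_{ij}A_j$, the value $I_{DW,G}(\SS^3,l)$ depends only on the linking-and-framing matrix $(m_{ij})$ and the labels $(a_i,b_i)$: by \eqref{I_DW,G} it equals $1/|G|$ exactly when the linear system \eqref{b_i=...} is solvable and vanishes otherwise. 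For the first and third move it therefore suffices to record how the move changes $(m_{ij})$ and to verify that the prescribed relabelling leaves the solvability of \eqref{b_i=...} intact.

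First I would treat the first move. A full twist is realised by a self-diffeomorphism of a tubular neighbourhood of the affected component, so $(\SS^3)'_l$ is unchanged as a manifold and the twist only shifts the framing, i.e.\ the diagonal entry $m_{ii}$, by $\pm 1$. The $i$-th equation of \eqref{b_i=...} then acquires an extra summand $\pm a_i=\pm\varphi(A_i)$ on its right-hand side, and replacing the label $b_i$ by $b_i\mp a_i$ restores that equation unchanged; hence the solvability condition and the value $1/|G|$ are preserved, the two signs matching the left and the right twist (figure \ref{fig: Graphical representation move1}). The third move is formally the same computation performed on an off-diagonal entry: removing the over-crossing followed by the under-crossing changes the mutual linking number $m_{12}=m_{21}$ by $\pm 1$ and nothing else, so the right-hand sides of the first two equations of \eqref{b_i=...} shift by $\pm a_2$ and $\pm a_1$, which is compensated exactly by the relabelling $(a_1,b_1\pm a_2)$ and $(a_2,b_2\pm a_1)$ of figure \ref{fig: Graphical representation move2}; when the two strands lie on one component this specialises back to the first move.

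The second move is genuinely different and is where the work lies, since it alters the homeomorphism type of the complement and the number of link components, so the linking-matrix bookkeeping no longer applies. My plan is to interpret the local replacement in figure \ref{figureSkein} as cutting $(\SS^3)'_l$ along a properly embedded surface that separates the two parallel strands; by van Kampen this modifies $H_1$ by adjoining one free generator. Using \eqref{Equation Aequivalence bundle Gruppenhomo} I would then organise the homomorphisms $\varphi$ on the cut manifold according to the holonomy $g\in G$ they assign to this new generator, so that the groupoid of bundles fibres over $g$, and express $|\text{Bun}_{G,P}((\SS^3)'_l)|$ as a sum over $g\in G$ of the cardinalities attached to the configurations on the right-hand side of figure \ref{figureSkein}.

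The hard part will be establishing this gluing step rigorously: I must prove that groupoid cardinality is additive under the cut-and-sum decomposition and match the resulting expression --- including the powers of $|G|$ contributed by the automorphism groups and by the change in the number of components --- to the precise coefficients displayed in figure \ref{figureSkein}. This is essentially a Mayer--Vietoris principle for the cardinalities of bundle groupoids. Once it is available, the first and third moves reduce, as above, to routine linear algebra over $G$, and the three assertions of the proposition follow.
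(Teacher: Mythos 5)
Your treatment of Moves 1 and 3 is correct and coincides with the paper's argument: the manifold $(\SS^3)'_l$ and its first homology are unchanged, only the expression of the longitudes $B_i$ in the meridian basis $A_j$ (i.e.\ the matrix $(m_{ij})$) shifts by $\pm 1$ in the relevant entry, and the prescribed relabelling restores the linear system \eqref{b_i=...} verbatim. The paper states exactly this, only more tersely.

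For Move 2, however, your proposal has a genuine gap. You correctly observe that this move changes the number of components and hence the shape of the linear system, but the machinery you then set up --- cutting $(\SS^3)'_l$ along a separating surface, a van Kampen computation, and a ``Mayer--Vietoris principle for groupoid cardinalities'' --- is never carried out: you explicitly defer the key additivity statement as ``the hard part'', so no proof of Move 2 is actually given. Moreover, this machinery is unnecessary. Each term $l_{b_1,b_2}$ on the right of figure \ref{figureSkein} is itself a labelled ribbon link in $\SS^3$, so formula \eqref{I_DW,G} applies to it directly: its invariant is $1/|G|$ if its system \eqref{b_i=...} is solvable and $0$ otherwise. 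The paper's argument is then pure counting: (i) a homomorphism $\varphi$ satisfying the conditions for some $l_{b_1,b_2}$ induces one satisfying the conditions for $l$, so if $l$ is unsolvable every term vanishes; (ii) conversely, a solution for $l$ determines the holonomies of the two new longitudes and hence forces a \emph{unique} pair $(b_1,b_2)$ with $b_1\cdot b_2=b$ for which $l_{b_1,b_2}$ is solvable. Thus exactly one term in the sum equals $1/|G|$ and the rest vanish, which is the claimed identity. I would recommend replacing your cut-and-glue plan for Move 2 by this bijection between solution sets; as written, your argument for that move is incomplete.
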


\begin{figure}

\begin{subfigure}[b]{0.4\textwidth}
\begin{overpic}
[scale=0.4]{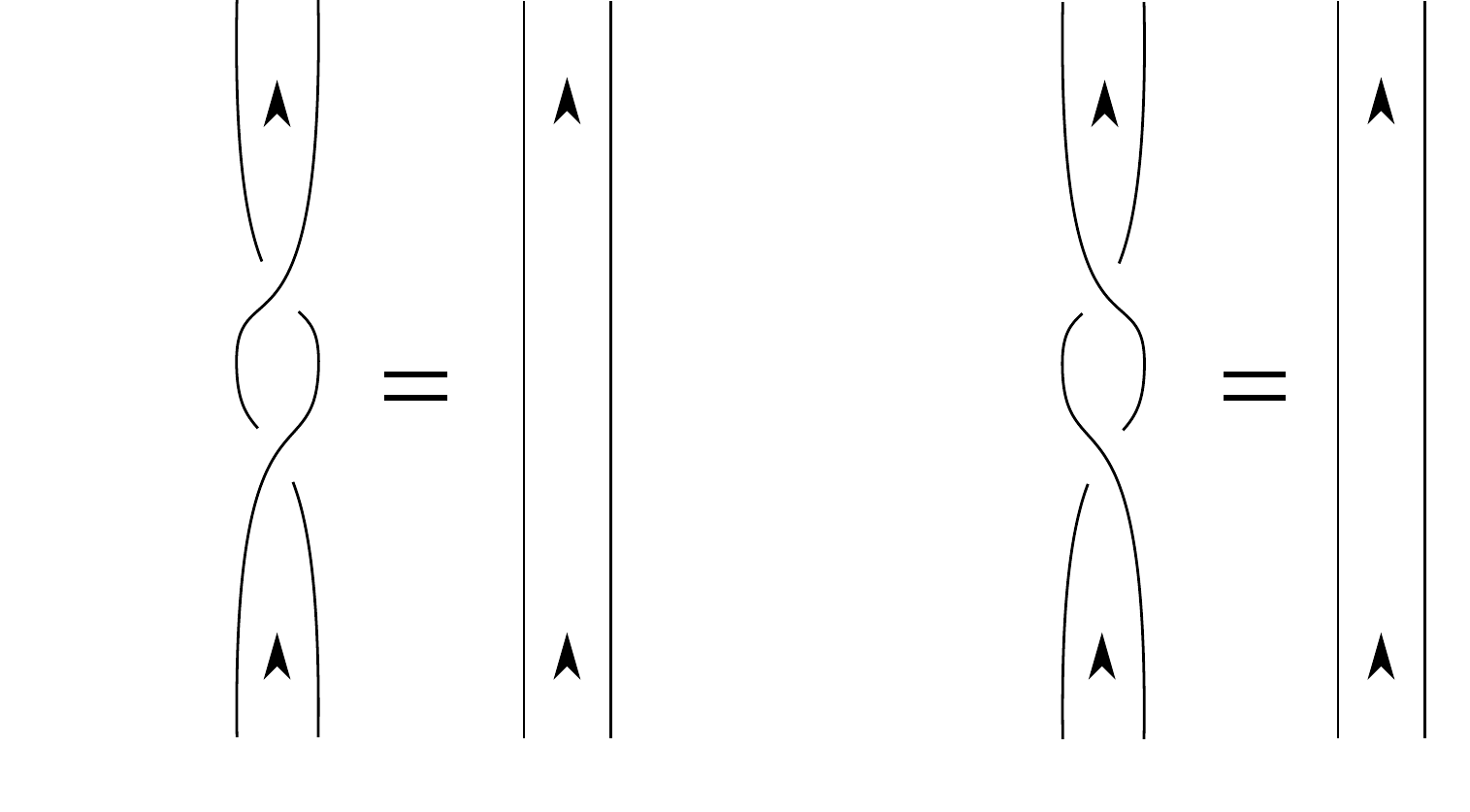}
\put(12,0){$(a,b)$}
\put(28,0){$(a,b\! -\! a)$}
\put(68,0){$(a,b)$}
\put(84,0){$(a,b\! +\! a)$}
\end{overpic}
\caption{Graphical representation of move 1.}
\label{fig: Graphical representation move1}
 \end{subfigure} 
 \hspace{20pt}
\begin{subfigure}[b]{0.4\textwidth}
\begin{overpic}[width=10cm,tics=10,scale=0.7]{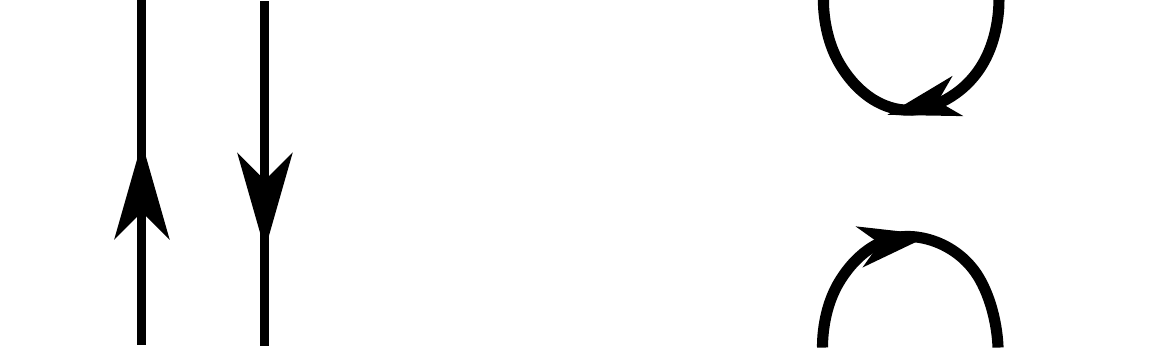}
\put(2,21){$(a,b)$}
\put(25,2){$(a,b)$}
\put(50,15){\Huge{$ \sum$}}
\put(35,15){\Huge{$=$}}
\put(47,9){$b_1\cdot b_2=b $ }
\put(86,5){$(a,b_1)$}
\put(86,20){$(a,b_2)$}
\end{overpic}
\caption{Graphical representation of move 2.}
\label{figureSkein}
\end{subfigure} \vspace{20pt}
\begin{center}
\begin{subfigure}[b]{0.65\textwidth}
\begin{overpic}[width=10cm, tics=10,scale=0.45]{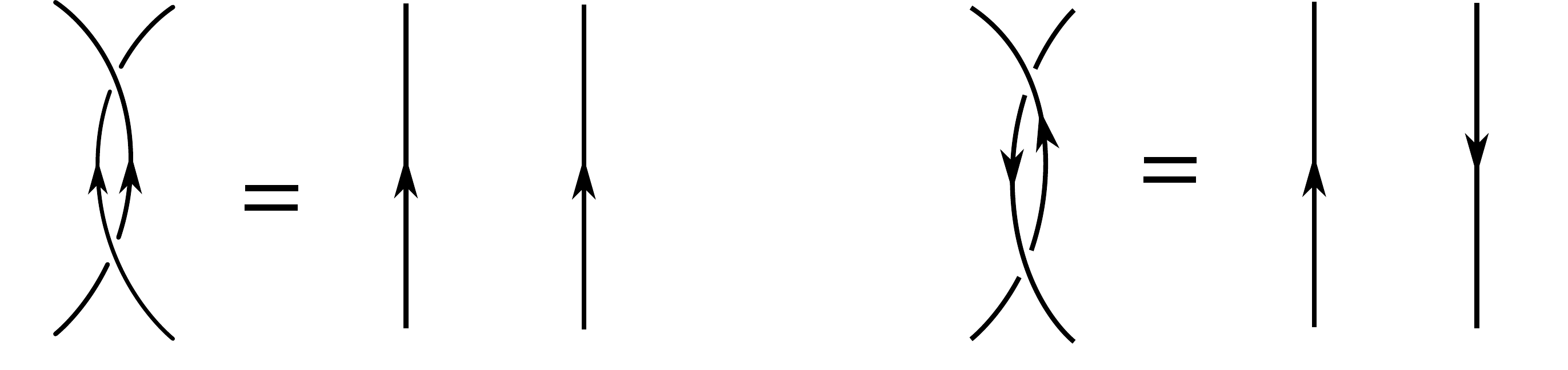}
\put(-1,0){\tiny{$(a_1,b_1)$}}
\put(8,0){\tiny$(a_2,b_2)$}
\put(19,0){\tiny{$(a_1,b_1\! +\! a_2)$}}
\put(33,0){\tiny{$(a_2,b_1\! +\! a_1)$}}
\put(57,0){\tiny{$(a_1,b_1)$}}
\put(66,0){\tiny$(a_2,b_2)$}
\put(77,0){\tiny{$(a_1,b_1\! -\! a_2)$}}
\put(91,0){\tiny{$(a_2,b_1\! -\! a_1)$}}
\end{overpic}
\caption{Graphical representation of move 3.}
\label{fig: Graphical representation move2}
 \end{subfigure}

\caption{Graphical representation of proposition \ref{proposition: Move1, 2 and 3}. The lines represent ribbons in positive blackboard framing.}
\end{center}
\end{figure}

\begin{proof}
\leavevmode
\begin{itemize}
\item 
\textbf{Move 1 and 3:} The first homology group of $(\SS^3)'_l$ does not 
change, if we apply move 1 or 3, but the cycles $B_i$ change. The change of the labels is chosen such that the resulting conditions are invariant.

\item 
\textbf{Move 2:} We denote the link corresponding to a choice of $b_1$ and $b_2$ by $l_{b_1,b_2}$. If the condition \eqref{b_i=...} is satisfied for $l_{b_1,b_2}$ then it also holds for $l$. On the other hand there is exactly one $l_{b_1,b_2}$ satisfying \eqref{b_i=...} for every $l$ satisfying \eqref{b_i=...}.
\end{itemize} 
\end{proof}

\begin{remarks}
\begin{enumerate}
\item
Move 3 can be used to interchange over- and under-crossings. 
This allows us to reduce every ribbon link in $\SS^3$ to 
a collection of simple unknotted links, providing 
an efficient algorithm for computing the value of
the Dijkgraaf-Witten state $I_{DW,G}$ on links in $\SS^3$. 

\item
A result of \cite{BHMV} will allow us to generalize these 
relations to ribbon links in general three dimensional manifolds,
cf.\  corollary 
\ref{Corollary generalisation to arbitrary manifolds}. 
\end{enumerate}
\end{remarks}
 
To explicitly describe the \gGNS functors  
$\mathcal{F}_{\emptyset , I_{DW,G}}$ for the Dijkgraaf-Witten
state $I_{DW,G}$, we determine the vector space 
associated to a torus $T^2$ with the help of a 
Mayer-Vietoris argument.

\begin{proposition}
\label{proposition: Vektorraum T^2}
Fix an untwisted ribbon link $k$ in the standard full torus such that the core of $k$ is homotopic to the $B$-cycle on the boundary.
Each labelling of $k$ gives a morphism in 
$\Hom(\emptyset,T^2)$. The collection of these morphisms
induces a basis of the vector space $\mathcal{F}_{\emptyset , I_{DW,G}}(T^2)$. In particular the dimension of $\mathcal{F}_{\emptyset , I_{DW,G}}(T^2)$ is $|G|\times |G|$.   
\end{proposition}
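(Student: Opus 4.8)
The plan is to exhibit the $|G|^2$ morphisms $k_{(a,b)}$, one for each label $(a,b)\in G\times G$ of the single component of $k$, and to prove that their classes $[\delta_{k_{(a,b)}}]$ form a basis of $\mathcal{F}_{\emptyset,I_{DW,G}}(T^2)$. Since this \gGNS functor is $\emptyset$-exhausted and its pairing against $\mathcal{F}^{co}_{\emptyset,I_{DW,G}}(T^2)$ is non-degenerate, it suffices to understand the bilinear pairing $(\delta_f,\delta_g)_{T^2}=I_{DW,G}(g\circ f)$, where $g\circ f$ is the closed three-manifold obtained by gluing the two fillings of $T^2$ along their common boundary. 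First I would reduce all of this data to data living on the gluing torus, and then compute that data explicitly on the solid-torus morphisms.

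The key tool is a gluing formula for $I_{DW,G}$, which is where the Mayer--Vietoris argument enters. Writing $f$ and $g$ as fillings of $T^2$ and removing the Wilson tubes, the bundle groupoid of the glued manifold is the homotopy fibre product of the bundle groupoids of the two pieces over the restriction functors to $\text{Bun}_G(T^2)$; this descent property is the groupoid incarnation of the Mayer--Vietoris sequence for $H_1$ combined with the classification \eqref{Equation Aequivalence bundle Gruppenhomo}. Taking groupoid cardinalities, and using that $\pi_0(\text{Bun}_G(T^2))=G\times G$ with every object having automorphism group $G$, yields
\[
I_{DW,G}(g\circ f)=\frac{1}{|G|}\sum_{(x,y)\in G\times G}\beta_f(x,y)\,\beta_g(x,y),
\]
where $\beta_f(x,y)$ is the groupoid cardinality of the $G$-bundles on the $f$-filling, compatible with its Wilson labels, that restrict to a fixed representative of the isomorphism class $(x,y)$ on $T^2$. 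Consequently the pairing factors through the assignment $f\mapsto\beta_f\in\K[G\times G]$ together with a fixed non-degenerate diagonal form. Any $f$ with $\beta_f=0$ therefore lies in the radical $\lR_{T^2}$, so $\mathcal{F}_{\emptyset,I_{DW,G}}(T^2)$ is a quotient of a subspace of $\K[G\times G]$, giving $\dim\mathcal{F}_{\emptyset,I_{DW,G}}(T^2)\le|G|^2$.

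It remains to compute $\beta$ on the solid-torus morphisms. Drilling out a tubular neighbourhood of $k$ from the full torus $V$ leaves $T^2\times[0,1]$, on which every $G$-bundle is determined up to isomorphism by its restriction to either boundary component. Matching the Wilson label $(a,b)$ on the inner torus---tracking that the meridian and longitude of the Wilson tube correspond to the $A$- and $B$-cycles of $\partial V$ under the homotopy equivalence---forces the outer restriction to be exactly the class $(a,b)$, and a short count shows the relevant fibre has cardinality one. Hence $\beta_{k_{(a,b)}}=\delta_{(a,b)}$, so the $\beta_{k_{(a,b)}}$ are precisely the standard basis of $\K[G\times G]$. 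Together with the factorisation of the previous paragraph this forces $\dim\mathcal{F}_{\emptyset,I_{DW,G}}(T^2)=|G|^2$ and shows that the classes $[\delta_{k_{(a,b)}}]$ form a basis. As an independent check one may pair $k_{(a,b)}$ with the dual fillings that glue the two solid tori into $\SS^3$ with the cores forming a Hopf link; the computation \eqref{I_DW,G} then exhibits the Gram matrix as $\tfrac1{|G|}$ times a permutation matrix, confirming non-degeneracy.

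The main obstacle I expect is making this gluing formula precise at the groupoid level: one must verify that $\text{Bun}_G$ with the Wilson-line boundary conditions satisfies descent along the cutting torus, and keep careful track of automorphism groups so that the factor $\tfrac1{|G|}$ coming from $\text{Bun}_G(T^2)$ is correctly accounted for. The remaining, essentially routine, task is the cycle bookkeeping that identifies the boundary holonomy of $k_{(a,b)}$ with its label $(a,b)$.
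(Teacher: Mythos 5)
Your argument is correct and follows essentially the same route as the paper: a Mayer--Vietoris/descent argument showing that the pairing only depends on the restriction data over the gluing torus, followed by the observation that the labelled ribbon in the solid torus admits exactly one compatible boundary bundle class, namely its label $(a,b)$. The only blemish is a normalization mismatch --- with $\beta_f$ defined as a groupoid cardinality the gluing prefactor is $|\mathrm{Aut}(P)|=|G|$ rather than $1/|G|$, and the solid-torus fibre has cardinality $1/|G|$ rather than $1$ --- but these factors cancel (as your Hopf-link check confirms) and do not affect the conclusion.
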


\begin{proof}
Suppose we are given connected cobordisms
$(M,l)\in \Hom(\emptyset, T^2)$ and $(N,k)\in 
\Hom(T^2, \emptyset)$, together with group 
homomorphisms 
$\varphi_M \colon H_1(M'_l)\rightarrow G$ and 
$\varphi_N \colon H_1(N'_k)\rightarrow G$ which determine
principal $G$-bundles on $M'_l$ and $N'_l$ 
compatible with the labels of $l$ and $k$.\\ 
By the Mayer-Vietoris sequence there is a pushout square 
\begin{equation}
\begin{tikzcd}
H_1(T^2) \ar[r]\ar[d] & H_1((N)'_{k}) \ar[d] \\
H_1((M)'_{l}) \ar[r] & H_1((N \sqcup_{T^2} M)'_{l\sqcup k})
\end{tikzcd} \; .
\end{equation} 

The universal property of pushouts implies that there is a group 
homomorphism 
\[ \varphi_N \colon H_1((N \sqcup_{T^2} M)'_{l\sqcup k})\rightarrow G \]
restricting to $\varphi_M$ and $\varphi_N$, if and only if $\varphi_M|_{T^2} = \varphi_N|_{T^2}$. Every group 
homomorphism corresponding to a bundle over $(N \sqcup_{T^2} M)'_{l\sqcup k}$ compatible with all labels arises in this way.
 This implies that $v=\sum \alpha_i \delta_{(M_i,l_i)} \in \mathcal{F}_{\emptyset}(T^2)$ and $v'=\sum \alpha'_j \delta_{(M'_j,k_j)}\in \mathcal{F}_{\emptyset}(T^2)$ are in the same equivalence class of $\mathcal{F}_{\emptyset , I_{DW,G}}(T^2)$, if for all possible choices of principal $G$-bundles $P$ on $T^2$ 

\begin{align}
\label{equation: Gleichheit}
\sum \alpha_i |\text{Bun}_{G,P\sqcup P(M_i)}((M_i)'_{l_i})|= \sum \alpha'_j |\text{Bun}_{G,P\sqcup P(M'_j)}((M'_j)'_{k_j})|  
\end{align}
holds. Here we denoted by $|\text{Bun}_{G,P\sqcup P(M_i)}((M_i)'_{l_i})|$  the groupoid cardinality of the groupoid consisting of bundles over $(M_i)'_{l_i}$ such that these bundles restrict to the isomorphism class of the bundle defined by $P$ and the labels of links in $M_i$. \\
For a ribbon link $l$ as described in the proposition 
labelled by a principal $G$-bundle $\tau$ there is up isomorphism just one bundle on the boundary which can be extended to $(T^2)_l' $ compatible with the label. This bundle is $\tau $. This implies by \eqref{equation: Gleichheit} that such elements form a basis.    
\end{proof}

To describe the functor on all objects of $\cob_{G,3,2}$, we use a 
general result from Blanchet et al. \cite{BHMV}, 
whose adaptation to $\cob_{G,3,2}$ is straightforward.
To state the result, we first need two definitions:

\begin{definition}[p.\ 889 \cite{BHMV}]
We denote by $\DD^n$ the $n$-dimensional 
closed unit ball.

A functor $Z \colon \cob_{G,3,2}^1 \rightarrow \vs $ with $Z(\emptyset)
\cong \K$ is compatible with surgery, if:
\begin{itemize}
\item[(S0)] 
$ Z(\SS^3)\neq 0 $.
\item[(S1)] 
There is an $\eta \in \K$, such that  $Z(\SS^0\times \DD^3)=\eta Z(\DD^1 \times \SS^2)$.

\item[(S2)] 
The element $Z(\DD^2\times \SS^1)$ lies in the sub vector space generated by ribbon links in the solid torus $-(\SS^1\times \DD^2)$. 
\end{itemize}
\end{definition}

\begin{definition}
Let $M$ be a three-manifold with boundary $\partial M =\Sigma$ 
equipped with the structure of an object in $\cob_{G,3,2}^1$. 
We denote by $\mathcal{L}(M,\Sigma)\subset \mathcal{F}_{\emptyset}(\Sigma)$ the vector space freely generated by the set of equivalence classes of ribbon links in $M$, which describe an element of $\Hom_{\cob_{G,3,2}}(\emptyset,\Sigma)$.
\end{definition}

In \cite{BHMV}, surgery for three-manifolds was used
to show:

\begin{proposition}[Proposition 1.9 of \cite{BHMV}]
\label{Suergery simplification}
If the $\emptyset$-exhausted lax functor from the universal construction $\mathcal{F}_{\emptyset , I} \colon 
\cob_{G,3,2}^1\rightarrow \vs$
for a state $I$ 
is compatible with surgery, then for every $\Sigma \in 
\cob_{G,3,2}$ and connected manifold $M$ with boundary $\Sigma $ the natural map $\pi \colon \mathcal{L}(M,\Sigma)\rightarrow \mathcal{F}_{\emptyset , I}(\Sigma)$ is  surjective. \\  
Furthermore, let $M'$ be any 3-dimensional compact connected manifold with boundary $- \Sigma$, then the kernel of $\pi$ is the same as the left radical of the canonical pairing $(\cdot , \cdot)' \colon \mathcal{L}(M,\Sigma) \otimes \mathcal{L}(M',-\Sigma)\rightarrow \K $.
\end{proposition}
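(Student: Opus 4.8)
The plan is to prove the two assertions in turn: first the surjectivity of $\pi$, which is the geometric heart of the statement and where I expect the real work to be, and then the identification of $\ker\pi$ with the left radical of $(\cdot,\cdot)'$, which should follow formally from the surjectivity applied to the opposite category together with the nondegeneracy of the induced pairing $\langle\cdot,\cdot\rangle_\Sigma$. For the surjectivity, I would start from the fact that \gGNS functors based on an $\emptyset$-rooted state are $\emptyset$-exhausted, so that $\mathcal{F}_{\emptyset,I}(\Sigma)$ is spanned by the classes $[\delta_{(N,k)}]$ as $(N,k)$ ranges over all cobordisms $\emptyset\to\Sigma$. It then suffices to show that each such class lies in $\mathrm{im}(\pi)$, i.e.\ in the span of the classes coming from labelled ribbon links in the single fixed connected manifold $M$. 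After a preliminary reduction --- absorbing closed components of $N$ into scalars via the lax monoidal structure (Theorem \ref{Theorem monoidal}) and $\mathcal{F}_{\emptyset,I}(\emptyset)\cong\K$, and joining the remaining components by interior tubes at the cost of a factor of the scalar $\eta$ from (S1) (nonvanishing by (S0), cf.\ \cite{BHMV}) --- I may assume that $N$ is connected.

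The key geometric input is the relative form of the Lickorish--Wallace theorem: two connected compact oriented three-manifolds with the same boundary $\Sigma$ differ by surgery along a framed link in the interior, so there is a framed link $L\subset\mathrm{int}(M)$ with $M_L\cong N$ relative to $\Sigma$. I would then induct on the number of components of $L$. The inductive step rests on the surgery axioms: condition (S2) guarantees that performing the surgery along a single framed curve $\gamma$ is equivalent, inside $\mathcal{F}_{\emptyset,I}$, to inserting a fixed linear combination of labelled ribbon links supported in the solid-torus neighbourhood $N(\gamma)$ of $\gamma$ (the ``surgery'' or Kirby element), with the normalization supplied by the scalar $\eta$ of (S1). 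Since such links lie in $M$ and run along $\gamma$, each surgery replaces $[\delta_{(M_L,k)}]$ by a linear combination of classes $[\delta_{(M,l)}]$, and after all components of $L$ have been processed we obtain $[\delta_{(N,k)}]\in\mathrm{im}(\pi)$.

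I expect the main obstacle to be precisely this inductive step: making the ``surgery equals a sum of links'' rewriting rigorous for the labelled ribbon links of $\cob_{G,3,2}$. One must check that the mapping-class-group element of $T^2$ relating the standard and surgery gluings of the solid torus acts on $\mathcal{F}_{\emptyset,I}(T^2)$ compatibly with (S2), and keep careful track of the framing and label bookkeeping so that the resulting links carry admissible $G$-bundle labels on $T^2$. This is exactly where the adaptation of the argument of \cite{BHMV} from the Kauffman-bracket setting to $\cob_{G,3,2}$ has to be carried out, although it is routine once one uses that $G$ is finite, so that all relevant spans are finite-dimensional.

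For the kernel statement, note that $\pi$ factors as the inclusion $\mathcal{L}(M,\Sigma)\hookrightarrow\mathcal{F}_\emptyset(\Sigma)$ followed by the quotient onto $\mathcal{F}_{\emptyset,I}(\Sigma)=\mathcal{F}_\emptyset(\Sigma)/\lR_\Sigma$, so $v\in\ker\pi$ if and only if $(v,\delta_g)_\Sigma=0$ for all $g\in\Hom(\Sigma,\emptyset)$. For links $g$ in $M'$ the value $(v,\delta_g)_\Sigma$ coincides with $(v,\delta_g)'$, whence $\ker\pi\subseteq\lR((\cdot,\cdot)')$ is immediate. For the reverse inclusion I would apply the first part in the opposite category: since exchanging $\cob_{G,3,2}$ with $\cob_{G,3,2}^{\mathrm{op}}$ exchanges the two \gGNS functors, and since orientation reversal preserves the surgery conditions (S0)--(S2), the surjectivity statement applied to the connected manifold $M'$ shows that the classes $[\delta_g]$ of ribbon links in $M'$ span $\mathcal{F}^{co}_{\emptyset,I}(\Sigma)$. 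If $v\in\lR((\cdot,\cdot)')$, then $\langle\pi(v),[\delta_g]\rangle_\Sigma=0$ for all these spanning classes, so $\pi(v)=0$ by nondegeneracy of $\langle\cdot,\cdot\rangle_\Sigma$. Thus $\ker\pi=\lR((\cdot,\cdot)')$.
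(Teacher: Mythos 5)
The paper does not actually prove this statement: it is imported verbatim as Proposition~1.9 of \cite{BHMV}, with only the remark that its adaptation to $\cob_{G,3,2}$ is straightforward. Your sketch is a correct reconstruction of that argument --- $\emptyset$-exhaustedness to reduce to classes $[\delta_{(N,k)}]$, reduction to connected $N$ via (S0)/(S1), the relative Lickorish--Wallace theorem plus an induction using (S2) to trade each surgery torus for links in $M$, and the formal duality/nondegeneracy argument for $\ker\pi=\lR((\cdot,\cdot)')$ --- so it matches the intended proof, and you correctly identify the framing/mapping-class bookkeeping in the (S2) step as the only point requiring care in the adaptation.
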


The reader should appreciate that 
the inclusion of co-dimension two defects, 
i.e.\ Wilson lines, 
is crucial for this result.
To apply this result to the \gGNS functor
$\mathcal{F}_{\emptyset , I_{DW,G}}$ for Dijkgraaf-Witten theories,
we need to check its compatibility with surgery:

\begin{lemma}
The \gGNS\ functor $\mathcal{F}_{\emptyset , I_{DW,G}}$ 
is compatible with surgery.
\end{lemma}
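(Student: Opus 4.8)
The plan is to verify directly the three conditions (S0), (S1) and (S2) in the definition of compatibility with surgery for the state $I_{DW,G}$, using throughout the classification \eqref{Equation Aequivalence bundle Gruppenhomo}. Since $G$ is abelian, the conjugation action there is trivial, and the groupoid cardinality of an action groupoid $X /\!/ G$ equals $|X|/|G|$; hence for a connected closed three-manifold one has $I_{DW,G}(M)=|\Hom(H_1(M),G)|/|G|$, and more generally $I_{DW,G}(M)=|\Hom(H_1(M),G)|/|G|^{b_0(M)}$, where $b_0(M)$ is the number of connected components. Recall also that $\mathcal{F}_{\emptyset,I_{DW,G}}(\emptyset)\cong\K$ by Proposition \ref{monoidmorph}, so that the values $Z(\SS^3)$, $Z(\SS^0\times\DD^3)$ and $Z(\DD^2\times\SS^1)$ make sense as claimed.

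Conditions (S0) and (S2) are quickly disposed of. For (S0), since $H_1(\SS^3)=0$ we get $I_{DW,G}(\SS^3)=|\Hom(0,G)|/|G|=1/|G|\neq 0$. For (S2), the manifold $\DD^2\times\SS^1$ is a solid torus carrying the empty ribbon link and thus represents an element of $\mathcal{F}_{\emptyset,I_{DW,G}}(T^2)$; by Proposition \ref{proposition: Vektorraum T^2} this vector space is already spanned by labelled ribbon links in a solid torus whose core runs along the $B$-cycle, so $Z(\DD^2\times\SS^1)$ automatically lies in $\mathcal{L}(-(\SS^1\times\DD^2),T^2)$ once the two solid-torus conventions are matched.

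The substance of the proof is condition (S1). Both $\SS^0\times\DD^3=\DD^3\sqcup\DD^3$ and $\DD^1\times\SS^2$ are cobordisms $\emptyset\to\SS^2\sqcup\SS^2$, so by non-degeneracy of $\langle\cdot,\cdot\rangle_{\SS^2\sqcup\SS^2}$ it suffices to produce a single $\eta$ with $I_{DW,G}(g\circ(\SS^0\times\DD^3))=\eta\, I_{DW,G}(g\circ(\DD^1\times\SS^2))$ for every morphism $g\colon\SS^2\sqcup\SS^2\to\emptyset$. Writing $N$ for the three-manifold underlying $g$, composition caps off the two boundary spheres of $N$ with two balls to give a closed manifold $\hat N$, respectively glues a tube $\SS^2\times[0,1]$ between them to give $\tilde N$; since the spheres carry no arcs, any Wilson lines sit in $N$ and are untouched, so it is enough to compare the first homology of the corresponding link complements. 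A Mayer-Vietoris computation shows that if the two spheres lie in the same component of $N$ then $H_1(\tilde N)\cong H_1(\hat N)\oplus\mathbb{Z}$ with $b_0$ unchanged, whereas if they lie in different components then $H_1(\tilde N)\cong H_1(\hat N)$ while $b_0$ drops by one. In either case the formula of the first paragraph yields $I_{DW,G}(\hat N)=\tfrac{1}{|G|}I_{DW,G}(\tilde N)$, so $\eta=1/|G|$ works uniformly.

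I expect the main obstacle to be condition (S1): one must organise the Mayer-Vietoris argument so that it covers disconnected fillings and fillings carrying ribbon links on an equal footing, and check that the single constant $\eta=1/|G|$ emerges in every case. The conceptually interesting point is that the two topologically distinct effects of tubing versus capping -- gaining a free $\mathbb{Z}$-summand in $H_1$ in one case, losing a connected component in the other -- produce exactly the same factor of $|G|$ through the competing powers of $|G|$ in the groupoid-cardinality formula.
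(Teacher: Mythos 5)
Your proof is correct, and (S0) and (S2) coincide with the paper's verification. For (S1), however, you take a genuinely different (and more explicit) route. The paper argues indirectly: a Mayer--Vietoris argument shows that $\mathcal{F}_{\emptyset,I_{DW,G}}(\SS^2\sqcup \SS^2)$ is \emph{one-dimensional}, and since the classes of $\SS^0\times\DD^3$ and $\DD^1\times\SS^2$ are both non-zero (each manifold admits at least one $G$-bundle), they must be proportional; the constant $\eta$ is never computed. You instead pair both elements against an arbitrary closing morphism $g$ and compare the capped manifold $\hat N$ with the tubed manifold $\tilde N$ directly, showing via Mayer--Vietoris that either $H_1$ gains a free $\mathbb{Z}$-summand (spheres in the same component) or $b_0$ drops by one (different components), and that the two effects produce the identical factor through the competing powers of $|G|$ in $|\mathrm{Hom}(H_1,G)|/|G|^{b_0}$. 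This buys you the explicit value $\eta=1/|G|$ and handles the disconnected case cleanly, which the paper glosses over; the paper's route buys the stronger structural fact that $\mathcal{F}_{\emptyset,I_{DW,G}}(\SS^2\sqcup\SS^2)$ is one-dimensional, which is in the spirit of its treatment of $\mathcal{F}_{\emptyset,I_{DW,G}}(T^2)$. One point you should make explicit rather than defer: when $g$ carries a ribbon link, the invariant is a \emph{constrained} count of homomorphisms on $H_1$ of the link complement (fixed restriction to the link boundary tori), not the unconstrained quotient $|\mathrm{Hom}(H_1,G)|/|G|^{b_0}$; your argument still goes through because the capping and tubing regions are disjoint from the link, so the extra $\mathbb{Z}$-summand is unconstrained by the boundary conditions and the count still scales by exactly $|G|$, but as written this step is only asserted.
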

\begin{proof}
\leavevmode
\begin{itemize}

\item[(S0)] 
Up to isomorphism, there is just one principal $G$-bundle 
over $\SS^3$, since $H_1(\SS^3)$ is trivial. For this reason 
$I_{DW,G}(\SS^3) = \tfrac{1}{|G|}\neq 0$.

\item[(S1)] 
A Mayer-Vietoris argument similar to the argument in the proof of proposition \ref{proposition: Vektorraum T^2} shows that 
the vector space $\mathcal{F}_{\emptyset,I_{DW,G}}(\SS^2\sqcup \SS^2)$ is one dimensional.
The structure of the groupoid cardinality is crucial for the argument to work. 

The elements in $\mathcal{F}_{\emptyset,I_{DW,G}}(\SS^2\sqcup \SS^2)$ corresponding to the three-manifolds 
$\SS^0\times \DD^3$ and $\DD^1 \times \SS^2$ are not zero, since there exists at least one principal $G$-bundle over every manifold.

\item[(S2)] This requirement holds, since by proposition \ref{proposition: Vektorraum T^2} $\mathcal{F}_{\emptyset,I_{DW,G}}(T^2)$ is generated by ribbons in a full torus.
\end{itemize}
\end{proof}

We can now generalize proposition \ref{proposition: Move1, 2 and 3} to arbitrary manifolds.

\begin{corollary}\label{Corollary generalisation to arbitrary manifolds}
The relations of proposition \ref{proposition: Move1, 2 and 3} hold in any closed oriented three-dimensional manifold.
\end{corollary}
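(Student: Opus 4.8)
The plan is to combine the locality of the three moves with the surgery result, Proposition~\ref{Suergery simplification}. Each move alters the ribbon link only inside a small embedded ball $B\cong\DD^3$, so it suffices to control its effect across the boundary sphere $\Sigma:=\partial B\cong\SS^2$, regarded as an object of $\cob_{G,3,2}$ whose arcs are the paired points in which the ribbons meet $\partial B$. Inside $B$ the two sides of a move (for the skein move~2, the appropriate finite linear combination) restrict to elements $x_1,x_2\in\mathcal{L}(B,\Sigma)$, and the key reduction is that it is enough to prove
\[ [x_1]=[x_2]\quad\text{in}\quad\mathcal{F}_{\emptyset,I_{DW,G}}(\Sigma). \]
Indeed, for any closed oriented three-manifold $M$ in which the move is performed inside $B$, write $M=B\cup_\Sigma N$ with $N:=M\setminus\mathring{B}$; the tangle in $B$ and the link in $N$ present composable morphisms $\emptyset\to\Sigma$ and $\Sigma\to\emptyset$, and the definition of the pairing gives $I_{DW,G}(M,l_i)=\langle[x_i],[N]\rangle_\Sigma$. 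Once $[x_1]=[x_2]$ the two invariants agree, by bilinearity of the pairing in the case of move~2.

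To establish $[x_1]=[x_2]$ I would invoke Proposition~\ref{Suergery simplification}, which applies since the preceding lemma shows that $\mathcal{F}_{\emptyset,I_{DW,G}}$ is compatible with surgery. That proposition identifies the kernel of the surjection $\pi\colon\mathcal{L}(B,\Sigma)\to\mathcal{F}_{\emptyset,I_{DW,G}}(\Sigma)$ with the left radical of the canonical pairing against $\mathcal{L}(M',-\Sigma)$, for any fixed connected $M'$ with $\partial M'=-\Sigma$. I would choose $M'=\DD^3$, so that $B\cup_\Sigma\DD^3\cong\SS^3$ and the canonical pairing of $x_i$ with a tangle $y\in\mathcal{L}(\DD^3,-\Sigma)$ is precisely $I_{DW,G}$ evaluated on the closed-up link in $\SS^3$. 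The two closed-up links differ exactly by the move under consideration, so Proposition~\ref{proposition: Move1, 2 and 3} gives that they carry equal $\SS^3$-invariants. Hence $x_1-x_2$ pairs to zero against every $y$, lies in the left radical, and therefore $[x_1]=[x_2]$.

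The step I expect to be the main obstacle is the bookkeeping at $\Sigma=\partial B$: one must verify that the ribbons cut $\partial B$ in the ordered, $G$-labelled pairs of arcs required of an object of $\cob_{G,3,2}$, and---most delicately for move~1---that the twisted and the untwisted presentations, with their prescribed labels $(a,b)$ and $(a,b\pm a)$, induce the \emph{same} isomorphism class of $G$-bundle on $\Sigma$; this is exactly what places $x_1$ and $x_2$ over one and the same boundary object. One also needs connectedness of $M$ so that $N$ may be taken connected, which legitimises the factorisation of $I_{DW,G}(M)$ through the pairing. For moves~1 and~3 there is moreover a shortcut avoiding surgery altogether: the prescribed relabelling is arranged so that the move leaves the three-manifold $M'_l$ and the isomorphism class of its boundary bundle literally unchanged, whence $I_{DW,G}$ is immediately invariant, just as in the $\SS^3$ computation; only the skein move~2 genuinely calls for the surgery argument above.
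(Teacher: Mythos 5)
Your route is genuinely different from the paper's: the paper presents $M$ by surgery on a framed link in $\SS^3$ placed away from $l$, uses (S1) and (S2) to expand $I_{DW,G}(M,l)$ as a linear combination of invariants of links in $\SS^3$, applies Proposition~\ref{proposition: Move1, 2 and 3} to each term, and reverses the surgery. Your cut-along-a-sphere argument, however, has a genuine gap that you partly anticipate but do not resolve: it is not well-posed in $\cob_{G,3,2}$. The label of a ribbon link is attached to an \emph{entire connected component}, and every one of the three moves changes these labels globally ($(a,b)\mapsto(a,b\pm a)$ for move~1, the splitting into $(a,b_1)$ and $(a,b_2)$ for move~2, and similarly for move~3). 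Hence the arcs in which the two sides of a move meet $\partial B$ carry \emph{different} decorations $\tau_i\in G\times G$, so $x_1$ and $x_2$ are morphisms $\emptyset\to\Sigma_1$ and $\emptyset\to\Sigma_2$ into two different objects, and the exteriors are likewise two different morphisms $N_1\colon\Sigma_1\to\emptyset$ and $N_2\colon\Sigma_2\to\emptyset$ (the relabelling also affects the part of the component outside $B$). The factorisation $I_{DW,G}(M,l_i)=\langle[x_i],[N]\rangle_\Sigma$ with a single common $\Sigma$ and a single common $N$ is therefore unavailable, and ``$x_1-x_2$ lies in the left radical'' does not parse, since $x_1$ and $x_2$ do not live in one space $\mathcal{L}(B,\Sigma)$. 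Your proposed repair---that both presentations induce the same isomorphism class of bundle on $\Sigma$---misreads the definition of objects: arcs are decorated by fixed elements of $G\times G$, and $(a,b)\neq(a,b\pm a)$ unless $a=0$. To localise the moves one would first have to trade the global relabelling for a genuinely local modification, e.g.\ via the linking-ring relation of figure~\ref{Fig: RelationRing}, which the paper only introduces later.

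Your ``shortcut'' is also only half right. For move~1 it is correct: removing a twist changes the framing but not the core, so $M'_l$ is literally unchanged, the boundary parametrisation shifts the $B$-cycle by $\pm A$, and the relabelling exactly compensates; this does prove move~1 directly in any closed $M$. For move~3, however, the underlying link changes---removing a clasp turns a Hopf link into a two-component unlink, and these have non-homeomorphic complements---so $M'_l$ is \emph{not} unchanged; only $H_1(M'_l)$ and the classes $B_i$ behave controllably, and extending the paper's $H_1$-argument from $\SS^3$ to general $M$ would require an additional Mayer--Vietoris computation that you do not supply. Move~2, as you concede, has no shortcut at all. So as written the proposal establishes the corollary only for move~1; for moves~2 and~3 you should fall back on the surgery presentation of $M$ over $\SS^3$ as in the paper.
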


\begin{proof}
Using the compatibility with surgery we can relate the invariant for any ribbon link $l$ in a three dimensional manifold $M$ to a sum of invariants for links in $\SS^3$. We can apply to every term in the sum the relations of proposition \ref{proposition: Move1, 2 and 3}. Reversing the surgery proves the relation for $l$ in $M$.   
\end{proof}

We are now almost in a position to combine
propositions \ref{proposition: Move1, 2 and 3} and  
\ref{Suergery simplification}  to obtain a concrete 
description of the \gGNS functor for the Dijkgraaf-Witten
state $I_{DW,G}$.  
In this description, we will need a specific element which
we construct first.
\begin{figure}
\begin{center}

\begin{overpic}[width=10cm, tics=10,scale=1]{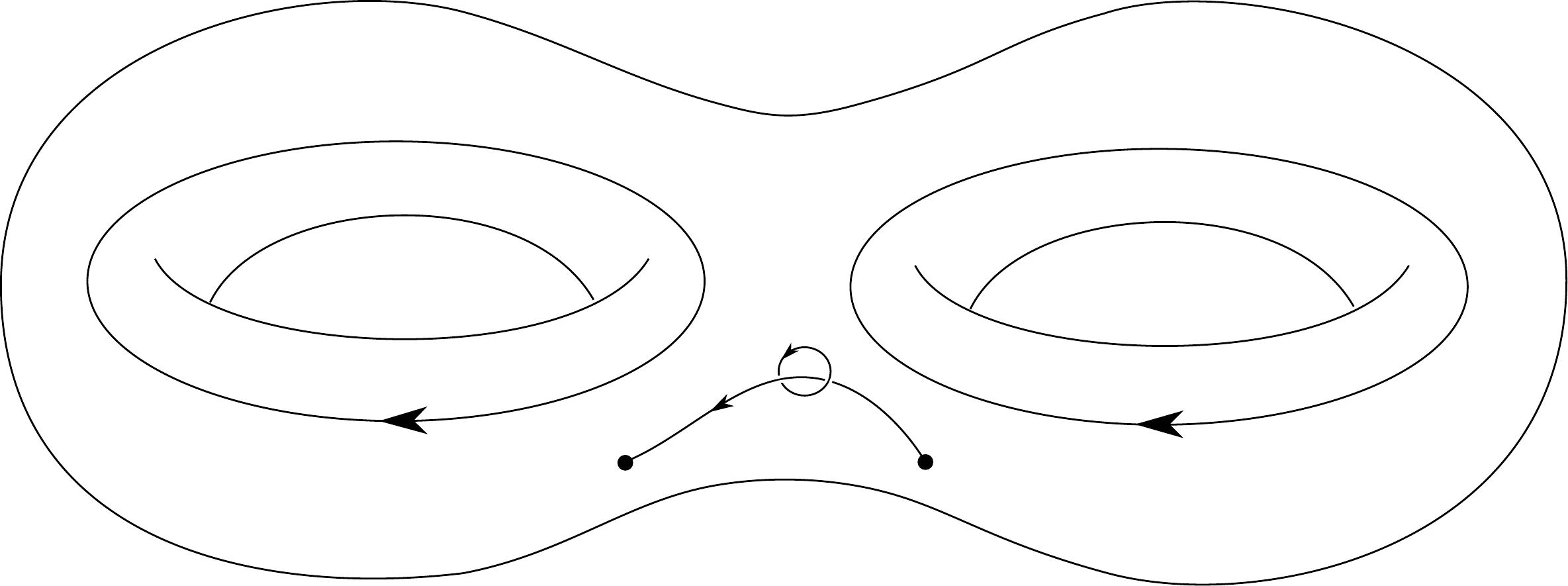}
\put(20,30){\scriptsize$(a_1,\! b_1)$}
\put(70,30){\scriptsize$(a_2,\! b_2)$}
\put(33,6){\scriptsize$(a,\! b)$}
\put(60,6){\scriptsize$(a,\!b)$}
\put(46,16){\scriptsize$(c_1,\! a)$}
\put(3,18){\scriptsize$k_1$}
\put(94,19){\scriptsize$k_2$}
\put(42,6.5){\scriptsize$l_1$}
\put(50,10.5){\scriptsize$g_1$}
\end{overpic}
\end{center}
\caption{An example for a manifold corresponding to the elements defined in remark \ref{Remark: Definition Basis} in the case of a manifold of genus $2$ with a pair of embedded arcs. 
For simplicity, only the core of the ribbon link is drawn.}
\label{Figure: Basis Element}
\end{figure}
\begin{remark}\label{Remark: Definition Basis}
Let $(\Sigma_g , (\mathfrak{a}_1,\tau_1),\dots ,(\mathfrak{a}_n,\tau_n))\in \cob_{G,3,2}$ be a connected surface of genus $g$ with $n$ pairs of embedded arcs. 
For each such surface, we fix a handle body $H_g$ bounding 
$\Sigma_g$, with the following additional structure (see also figure \ref{Figure: Basis Element}):
\begin{itemize}
\item
A ribbon knot $k_i$ going around every hole of $H_g$, 
a ribbon knot $l_j$ connecting 
every pair of arcs on the boundary and an untwisted ``small'' ribbon $g_j$ going around $l_j$ oriented according to the right hand 
rule. 
\item
The labels of the ribbon knots $l_j$ are determined by the 
labels of the arcs on the boundary $\Sigma_g$. We set the $B$-cycle holonomy of the small ribbons $g_j$ equal to the $A$-cycle 
holonomy of the corresponding ribbon $l_j$. 
\end{itemize}
Any choice of the remaining labels $((a_1,b_1),\dots , (a_g,b_g),c_1 ,\dots , c_n)\in G^{2g+n}$ determines 
a non-zero element. 
We denote the corresponding element of the vector space
$\mathcal{F}_{\emptyset,I_{DW,G}}((\Sigma_g,...))$ by $\delta_{(a_1,b_1),\dots ,(a_g,b_g),c_1,\dots ,c_n}$.   
\label{Basis}
\end{remark}
\begin{theorem}
\leavevmode
Let $G$ be a finite group, $\cob_{G,3,2}$ the cobordism category
with Wilson lines introduced at the beginning of this section
and $I_{DW,G}$ be the 
Dijkgraaf-Witten state introduced in
definition \ref{dw-state}.
\begin{enumerate}
\item 
The \gGNS\ functor 
$\mathcal{F}_{\emptyset,I_{DW,G}} \colon \cob_{G,3,2}
\rightarrow \vs $ based on the Dijkgraaf-Witten state
is a symmetric monoidal functor
and hence defines 
     an oriented
3-2-dimensional topological field theory.
\item 
For any object 
$(\Sigma_g , (\mathfrak{a}_1,\tau_1 ),\dots ,(\mathfrak{a}_n,\tau_n))$
of $\cob_{G,3,2}$, the family of vectors
\begin{align*}
 \left\lbrace \delta_{(a_1,b_1),\dots (a_g,b_g),c_1\dots c_n} \mid((a_1,b_1),\dots (a_g,b_g),c_1\dots c_n)\in G^{2g+n} \right\rbrace
\end{align*}
 form a basis 
of the vector space 
$\mathcal{F}_{\emptyset,I_{DW,G}}((\Sigma_g,(\mathfrak{a}_1,\tau_1),\dots , (\mathfrak{a}_n,\tau_n)))$. 
\end{enumerate}
\end{theorem}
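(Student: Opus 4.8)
The plan is to prove part 2 first, since the explicit basis controls all the dimensions and feeds directly into the verification of strong monoidality in part 1, after which symmetry follows almost formally from the general results already in hand. Work with a connected surface $\Sigma_g$ of genus $g$ carrying $n$ pairs of arcs, together with the fixed handlebody $H_g$ and the standard ribbon configuration of remark \ref{Remark: Definition Basis}. Since the lemma preceding corollary \ref{Corollary generalisation to arbitrary manifolds} establishes that $\mathcal{F}_{\emptyset,I_{DW,G}}$ is compatible with surgery, proposition \ref{Suergery simplification} applies: the natural map $\pi\colon \mathcal{L}(H_g,\Sigma_g)\to \mathcal{F}_{\emptyset,I_{DW,G}}(\Sigma_g)$ is surjective, so the space is spanned by classes of ribbon links sitting inside $H_g$. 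For the spanning half of part 2 I would then take an arbitrary such link and push it into normal form using corollary \ref{Corollary generalisation to arbitrary manifolds}, which makes moves 1--3 of proposition \ref{proposition: Move1, 2 and 3} available inside $H_g$: move 3 uncrosses strands, move 2 splits and recombines parallel strands running along a handle, and move 1 removes twists at the cost of shifting a $B$-label by the corresponding $A$-label. Iterating these should reduce any link to the configuration of remark \ref{Remark: Definition Basis} for some labelling, so the vectors $\delta_{(a_1,b_1),\dots,(a_g,b_g),c_1,\dots,c_n}$ span.

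For linear independence I would generalize the Mayer--Vietoris/pushout argument of proposition \ref{proposition: Vektorraum T^2}. The crucial point there is that the \gGNS class of $\delta_{(M,l)}$ is detected by the isomorphism class of the principal $G$-bundle that the configuration induces on the boundary, via the test equation \eqref{equation: Gleichheit}: pairing $\delta_{\vec a}$ against a boundary bundle $P$ gives a nonzero contribution only for the unique $P=P_{\vec a}$ extendable compatibly over $H_g$. The same mechanism should work in genus $g$: because $H_1(H_g)$ is free on the cores of the handles, the labels $(a_i,b_i)$ fix the holonomies along the $A$- and $B$-cycles of each handle and the $c_j$ fix the $A$-cycle holonomy of each arc ribbon, so the assignment $\vec a\mapsto P_{\vec a}$ injects $G^{2g+n}$ into the set of admissible boundary configurations, which itself has cardinality $|G|^{2g+n}$. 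Hence the pairing vectors indexed by the test bundles $P$ are supported on distinct indices, the $\delta_{\vec a}$ are independent, and they form a basis with $\dim \mathcal{F}_{\emptyset,I_{DW,G}}(\Sigma_g)=|G|^{2g+n}$; for $g=1$, $n=0$ this recovers proposition \ref{proposition: Vektorraum T^2}. Concretely, the pairing is computed by gluing $H_g$ to an orientation-reversed copy carrying the dual configuration, producing the closed manifold $\#^{g}(\SS^1\times\SS^2)$ with a combined link whose invariant is evaluated through the surgery relations.

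For part 1, proposition \ref{monoidmorph} supplies $\varphi_0$ since $I_{DW,G}$ is a morphism of unitary monoids (multiplicativity was noted after definition \ref{dw-state}), and theorem \ref{Theorem monoidal}(1) already endows $\mathcal{F}_{\emptyset,I_{DW,G}}$ with a lax monoidal structure whose $\varphi_2$ is injective. To upgrade to strong monoidality I would verify condition \eqref{eqationBedingungUniversalTensorfunct} using that the tensor product of $\cob_{G,3,2}$ is disjoint union and that $I_{DW,G}$ is multiplicative under disjoint union, since groupoid cardinality multiplies over disjoint unions of bundle groupoids. A handlebody bounding $\Sigma\sqcup\Sigma'$ may be taken to be $H\sqcup H'$, so by (the disconnected extension of) part 2 the standard basis of $\mathcal{F}_{\emptyset,I_{DW,G}}(\Sigma\sqcup\Sigma')$ consists of disjoint unions of standard links; these are precisely the images under $\varphi_2$ of products of standard basis elements, as $\varphi_2(\delta_f\otimes\delta_g)=\delta_{(f\otimes g)\circ r^{-1}}$ is exactly the disjoint union cobordism. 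Thus $\varphi_2$ is surjective, hence an isomorphism, and the dimension count $|G|^{2(g+g')+n+n'}=|G|^{2g+n}\cdot|G|^{2g'+n'}$ confirms this. Finally, $\cob_{G,3,2}$ is symmetric monoidal, so by proposition \ref{Proposition: Braiding} the now-monoidal functor is braided; because the braiding of $\cob_{G,3,2}$ is a symmetry and the target $\vs$ carries its standard symmetry, the resulting braided functor is in fact symmetric, so $\mathcal{F}_{\emptyset,I_{DW,G}}$ is a symmetric monoidal functor defining an oriented $3$-$2$-dimensional topological field theory.

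The main obstacle is the linear-independence step of part 2: transporting the clean pushout argument from the torus to arbitrary genus with Wilson lines. Two things must be checked with care. First, that the local moves genuinely reach a single normal form, so that no hidden relations remain among the $\delta_{\vec a}$ beyond those forced by the surgery radical. Second, that the groupoid-cardinality weights $1/|\mathrm{Aut}|$ assemble, after gluing along $\Sigma_g$, into a \emph{non-degenerate} pairing and not merely into a spanning family; this requires tracking how the $2g+n$ independent holonomies are detected by the invariant of $\#^{g}(\SS^1\times\SS^2)$ with its link and confirming that the off-diagonal contributions vanish. Everything else — lax monoidality, injectivity of $\varphi_2$, compatibility with surgery, and the symmetry — is supplied by the earlier propositions, so this bundle-counting computation is the step on which the theorem ultimately rests.
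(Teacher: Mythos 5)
Your overall architecture (surgery compatibility plus Proposition \ref{Suergery simplification} for spanning, the local moves of Proposition \ref{proposition: Move1, 2 and 3} for normal forms, a pairing computation for independence, and Theorem \ref{Theorem monoidal} together with Proposition \ref{Proposition: Braiding} for part 1) is the paper's strategy. But two steps you lean on are not actually supplied, and one of them is where the real content lies. For linear independence you only assert that the pushout argument of Proposition \ref{proposition: Vektorraum T^2} ``should work'' in genus $g$, and you propose to compute the pairing by doubling $H_g$ to $\#^g(\SS^1\times\SS^2)$. That is an awkward choice: in the double, the core $k_i$ and its mirror copy are parallel rather than Hopf-linked, so the holonomy constraints do not separate into the clean $\delta$-functions you need, and you explicitly leave the non-degeneracy of the resulting matrix open. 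The paper instead takes $M'$ to be the complement of $H_g$ in $\SS^3$ (another handlebody), puts dual ribbons $k_i^\star$, $l_j^\star$, $g_j^\star$ around the holes and arcs of the complement, and evaluates the pairing as an $I_{DW,G}$-invariant of an explicit link in $\SS^3$ using the earlier example; this yields the closed formula \eqref{Parung}, which is visibly of full rank. Without some such explicit evaluation your independence claim is a conjecture, not a proof. (You also omit the small but necessary device of Figure \ref{Fig: RelationRing}: applying moves 1--3 inside $H_g$ changes the labels of the boundary arcs, and one must compensate with an extra ring to land back in the fixed object.)

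The second gap is in part 1. You take the bounding manifold of $\Sigma\sqcup\Sigma'$ to be $H\sqcup H'$ and invoke ``the disconnected extension of part 2,'' but Proposition \ref{Suergery simplification} is stated for a \emph{connected} manifold $M$ with the given boundary, and for good reason: the point at issue is precisely whether a connected cobordism $\emptyset\to\Sigma\sqcup\Sigma'$ whose ribbon link joins the two boundary components lies in the span of split configurations. Starting from $\mathcal{L}(H\sqcup H',\Sigma\sqcup\Sigma')$ assumes the answer. The paper's proof avoids this circularity by working in a connected sum of handlebodies, using Corollary \ref{Corollary generalisation to arbitrary manifolds} to separate the link components into the two summands, and then applying the surgery move (S1) to split the connected sum into a disjoint union; only after that do Theorem \ref{Theorem monoidal} and Proposition \ref{Proposition: Braiding} finish the argument. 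Your concluding dimension count suffers from the same circularity, since it presupposes the basis of $\mathcal{F}_{\emptyset,I_{DW,G}}(\Sigma\sqcup\Sigma')$ that is being established.
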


\begin{proof}
\leavevmode
\begin{itemize}
\item[2.] We fix a handle body $H_g$ bounding $\Sigma_g$. We embed $H_g$ into $\SS^3$ and denote the closure of its complement by $M$. Proposition \ref{Suergery simplification} implies \[ \mathcal{F}_{\emptyset , I_{DW,G}}((\Sigma_g,(\mathfrak{a}_1,\tau_1),\dots , (\mathfrak{a}_n,\tau_n))) = \mathcal{L}(H_g,(\Sigma_g,(\mathfrak{a}_1,\tau_1),\dots , (\mathfrak{a}_n,\tau_n)))/ \text{lR}_{(\cdot,\cdot)'}\; , \]
 with \[(\cdot,\cdot)' \colon \mathcal{L}(H_g,(\Sigma_g,(\mathfrak{a}_1,\tau_1),\dots , (\mathfrak{a}_n,\tau_n)))\otimes \mathcal{L}(M,-(\Sigma_g,(\mathfrak{a}_1,\tau_1),\dots , (\mathfrak{a}_n,\tau_n)))\rightarrow \K \: . \] 
\begin{figure}[b!]
\begin{center}
\begin{overpic}[ scale=1]{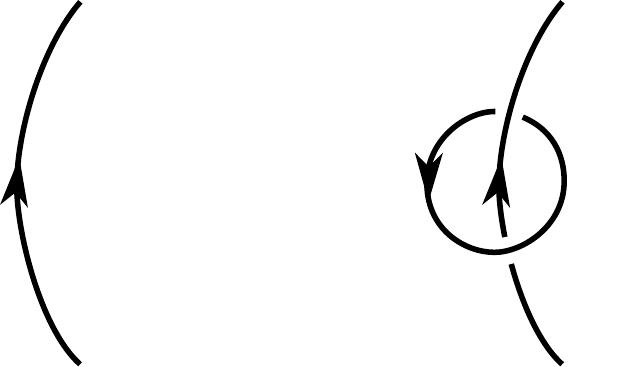}
\put(7,10){$(a,b\! \pm \! c)$}
\put(84,10){$(a,b)$}
\put(36,25){\Huge{$=$}}
\put(52,39){$(\pm c,a)$}
\end{overpic}
 \caption{This relation allows us to change the labels
of the ribbons connecting arcs to the value specified by the arcs after applying move 1, 2 and 3 
inside $H_g$. 
To prove the relation apply move 3 on the right side and use that a contractable untwisted ribbon labelled by $(a,0)$ can be removed without changing the invariant.}
\label{Fig: RelationRing}
\end{center}
\end{figure}
We can use the local relations developed in proposition \ref{proposition: Move1, 2 and 3} inside $H_g$, since the union of $H_g$ and $M$ is $\SS^3$. This changes the labels of the arcs on $\Sigma $. It is possible to compensate this 
by adding a ring around the ribbon
using the relation in figure \ref{Fig: RelationRing}. 
These relations allow us to reduce every ribbon link in $H_g$ to a collection of knots going around every hole and ribbon links connecting the arcs on the boundary with small circles around them. From proposition \ref{proposition: Vektorraum T^2} it follows that we can replace the collection of ribbon knots going around every hole by a sum over ribbon links with just one component going around every hole. This proves that the $\delta_{(a_1,b_1),\dots (a_g,b_g),c_1\dots c_n}$ generated $\mathcal{F}_{\emptyset,I_{DW,G}}((\Sigma_g,(\mathfrak{a}_1,\tau_1),\dots , (\mathfrak{a}_n,\tau_n)))$.

We still have to show that the family
$\left(\delta_{(a_1,b_1),\dots (a_g,b_g),c_1\dots c_n}\right)$ is
linearly independent. To this end,
we introduce for every hole in $M$ ribbons $k^\star_i$ going once around this hole oriented according to the right hand rule with respect to $k_i$, a ribbon knot $l^\star_j$ connecting every pair of arcs on the boundary and an untwisted ``small'' ribbon $g^\star_j$ going around $l^\star_j$. A choice of labels as in definition \ref{Basis} defines elements $\delta^\star_{(a_1,b_1),\dots (a_g,b_g),c_1\dots c_n} \in \mathcal{F}^{co}_{\emptyset , I_{DW,G}}(\Sigma_g,(\mathfrak{a}_1,\tau_1),\dots , (\mathfrak{a}_n,\tau_n))$. We have 
\begin{align}
\label{Parung}
\langle \delta_{(a_1,b_1),\dots (a_g,b_g),c_1\dots c_n},\delta^\star_{(a'_1,b'_1),\dots (a'_g,b'_g),c'_1\dots c'_n}\rangle = \delta_{a_1,b'_1}\cdot \delta_{b_1,a'_1} \cdots \delta_{c_n+c'_n, \mathcal{B}_n}, 
\end{align} where $\mathcal{B}_i$ is the $B$-cycle holonomy of $\tau_i$. Equation \eqref{Parung} implies that the $\delta_{(a_1,b_1),\dots (a_g,b_g),c_1\dots c_n}$ are linearly independent.  

\item[1.] Let $\Sigma_1$ and $\Sigma_2$ be elements of $\cob_{G,3,2}$. 
As a connected 3-manifold boundary
$\Sigma_1 \sqcup \Sigma_2$ we choose a connected sum of handle bodies $M$. We embed the handle bodies into $\SS^3$ and denote the closure of its complement by $N$. Gluing $M$ and $N$ together gives a manifold $S$. By corollary \ref{Corollary generalisation to arbitrary manifolds} we can apply the relations of proposition \ref{proposition: Move1, 2 and 3} also inside of $M$. We can use this to reduce a given ribbon link in $M$ corresponding to a morphism $f$ in $\text{Hom}_{\cob_{G,3,2}}(\emptyset, \Sigma_1 \sqcup \Sigma_2)$ to a link, for which any component is completely contained in one of the handle bodies. For this we need that every pair of labelled arcs is contained in the same connected component of $\Sigma_1 \sqcup \Sigma_2$. By applying a finite number of 1 surgery moves we see that $ f $ is equivalent to a linear combination of disjoint unions of handle bodies in the vector space associated to $\Sigma_1 \sqcup \Sigma_2 $. The statement follows from theorem \ref{Theorem monoidal} and proposition \ref{Proposition: Braiding}. 
\end{itemize}
\end{proof}

\begin{remark}
A basis element $\delta_{(a_1,b_1),\dots (a_g,b_g)}$ for a surfaces $\Sigma_g$ without arcs defines a unique isomorphism class of principal $G$-bundles $[P_{(a_1,b_1),\dots (a_g,b_g)}]\in \pi_0 (\text{Bun}_G(\Sigma_g))$. A representative for this class can be constructed by restricting a bundle compatible with all labels over the handle body $H_g$ with $g$ solid tori removed to $\Sigma_g$. 
This shows that the vector space $\mathcal{F}_{\emptyset, I_{DW,G}}(\Sigma_g)$ can be naturally identified with the linearization of isomorphism classes of flat $G$-bundles over $\Sigma_g$.

Furthermore, we can calculate the transition amplitude corresponding to a morphism $M\in \text{Hom}_{\cob_{G,3,2}}(\Sigma_g, \Sigma_{g'})$ by gluing in the manifolds corresponding to $\delta_{(a_1,b_1),\dots (a_g,b_g)}$ and $\delta^\star_{(a'_1,b'_1),\dots (a'_{g'},b'_{g'})}$. 
This reproduces the known description of transition amplitudes for 
Dijkgraaf-Witten theories in terms of the cardinalities of the groupoid 
of bundles over $M$ restricting to prescribed bundles on the boundary.
\end{remark}

\noindent
{\sc Acknowledgements:}\\[.3em]
CS is partially supported by the Collaborative Research Centre 676 ``Particles,
Strings and the Early Universe - the Structure of Matter and Space-Time'' 
and by the RTG 1670 ``Mathematics inspired by String theory and Quantum 
Field Theory''. 
LM is supported by the studentship 
ST$|$N509099$|$1 from the U.K. Science and 
Technology Facilities Council.

\FloatBarrier 
\bibliography{Quellen}
\end{document}